\documentclass[reprint]{elsarticle}
\usepackage{amsmath,amssymb,graphicx, amsthm,color}

\newtheorem{Thm}{Theorem}

\newtheorem{Lem}{Lemma}
\newtheorem{Rmk}{Remark}

\begin{document}

\begin{frontmatter}
\journal{  }

\title{High-order algorithms for solving eigenproblems over discrete surfaces}

 \author[Chen]{Sheng-Gwo Chen\corref{corresponding}}
 \author[Wu]{Mei-Hsiu Chi}\ead{mhchi@math.ccu.edu.tw}
 \author[Wu]{Jyh-Yang Wu}\ead{jywu@math.ccu.edu.tw}

\cortext[corresponding]{Corresponding author email :
csg@mail.ncyu.edu.tw}
\address[Chen]{Department of Applied Mathematics, National Chiayi University,  Chia-Yi 600,
Taiwan.}
\address[Wu]{Department of Mathematics, National Chung Cheng University, Chia-Yi 621,
Taiwan.}

\begin{abstract}
The eigenvalue problem of the Laplace-Beltrami operators on curved
surfaces plays an essential role in the convergence analysis of the
numerical simulations of some important geometric partial
differential equations which involve this operator. In this note we
shall combine the local tangential lifting (LTL) method  with the
configuration equation to develop a new effective and convergent
algorithm to solve the eigenvalue problems of the Laplace-Beltrami
operators acting on functions over discrete surfaces. The
convergence rates of our algorithms of discrete Laplace-Beltrami
operators over surfaces is $O(r^n)$, $n \geq 1$, where $r$
represents the size of the mesh of discretization of the surface.
The problem of high-order accuracies will also be discussed and used
to compute geometric invariants of the underlying surfaces. Some
convergence tests and eigenvalue computations on the sphere, tori
and a dumbbell are presented.

\end{abstract}
\begin{keyword}
Eigenproblem, Local tangential lifting method,  Configuration
equation, Discrete Laplace-Beltrami operator.
\end{keyword}

\end{frontmatter}

\section{Introduction}
Let $\Sigma$ be a smooth regular surface in the 3D space. The
Laplace-Beltrami (LB) operator is a natural generalization of the
classical Laplacian $\Delta_{\Sigma}$ from the Euclidean space to a
curved space. To understand the LB operators on curved surfaces, it
is natural to investigate their associated eigenvalue problems:

\begin{equation}\label{problem_LB_1}
\Delta_{\Sigma} \phi = \lambda \phi.
\end{equation}
Or, more generally,
\begin{equation}\label{problem_LB_2}
\nabla_{\Sigma}\cdot (h \nabla_{\Sigma} \phi) = \lambda \phi
\end{equation}
where $h$ is $C^2$ real function on $\Sigma$.

The eigenvalue problem of the LB operator plays important roles not
just in the study of geometric properties of curved spaces, but also
in many applications in the fields of physics, engineering and
computer science. The LB operator has recently many applications in
a variety of different areas, such as surface
processing\cite{Clarenz,Sapiro}, signal processing\cite{Romeny} and
geometric partial differential equations\cite{Desbrun}.

Since the objective underlying surfaces to be considered are usually
represented as discrete meshes in these applications,  it is useful
in practice to discretize the LB operators and solving the
eigenproblems over discrete surfaces.  There are many approaches for
estimating Laplace-Beltrami operator  and solving the
Laplace-Beltrami eigenproblems \cite{Shi1,Shi2}. In 2011,
Macdonald\cite{Macdonald} proposed an elegant method to solve the
Laplace-Bletrami eigenproblems for Equations (\ref{problem_LB_1})
and (\ref{problem_LB_2}) by the closest point method\cite{Ruuth}. In
this paper we shall describe  simple and effective methods with
high-order accuracies to define the discrete LB operator on
functions on a triangular mesh.

In 2012, Ray et al.\cite{Ray} used the method of least square to
obtain high-order approximations of derivatives and integrations. In
this paper, we shall use  ideas  developed in Chen, Chi and
Wu\cite{Chen3,Chen4} where we try to estimate the discrete partial
derivatives of functions on 2D scattered data points. Indeed, the
ideas that we shall use to develop our algorithms are divided into
two main steps: first we lift the 1-neighborhood points to the
approximating tangent space and obtain a local tangential polygon.
Second, we use some geometric idea to lift functions to the tangent
space. We call this a local tangential lifting (LTL)
method\cite{Chen5,Wu}. Then we present a new algorithm, the
configuration method, to compute their Laplacians in the 2D tangent
space. This means that the LTL process allows us to reduce the 2D
curved surface problem to the 2D Euclidean problem.

In other words, we shall combine the local tangential lifting (LTL)
method  with the configuration equation to develop a new effective
and convergent algorithm to solve the eigenpair problems of the
Laplace-Beltrami operators acting on functions over curved surfaces.
We shall also present a mathematical proof of the convergence of our
algorithm. Our algorithm is not only conceptually simple, but also
easy to implement. Indeed, the convergence rate of our new
algorithms of discrete Laplace-Beltrami operators over surfaces is
$O(r^n)$, $n \geq 1$, where $r$ represents the size of the mesh of
discretization of the surface. In section 2, we introduce the
gradient of a function, the divergence of a vector field and the
Laplace-Beltrami operator on regular surfaces. Our $O(r)$-LTL
configuration method is discussed in section 3. In section 4, we
discuss how to improve the methods to have high-order accuracies. We
also give some numerical simulations to support these results in
section 5.

\section{The gradient, divergence, and   Laplace-Beltrami operator}

In order to describe the gradient, divergence and the LB operator on
functions or vector fields in a regular surface $\Sigma$ in the 3D
Euclidean space $\mathbb{R}^3$, we consider a parameterization
$\mathbf{x} : U \rightarrow \Sigma$ at a point $\mathbf{p}$, where
$U$ is an open subset of the 2D Euclidean space $\mathbb{R}^2$. We
can choose, at each point $\mathbf{q}$ of $\mathbf{x}(U)$, a unit
normal vector $\mathbf{N}(\mathbf{q})$. The map $\mathbf{N}:
\mathbf{x}(U) \rightarrow \mathbb{S}^2$ is the local Gauss map from
an open subset of the regular surface $\Sigma$ to the unit sphere
$\mathbb{S}^2$ in the 3D Euclidean space $\mathbb{R}^3$. Denote the
tangent space of $\Sigma$ at the point $\mathbf{p}$ by
$T\Sigma_{\mathbf{p}} = \{ \mathbf{v} \in \mathbb{R}^3 | \mathbf{v}
\bot \mathbf{N}(\mathbf{p}) \}$. The tangent space
$T\Sigma_{\mathbf{p}}$ is a linear space spanned by $\{ \mathbf{x}_u
, \mathbf{x}_v \}$ where $u, v$ are coordinates for $U$.

The gradient $\nabla_{\Sigma} g$  of a smooth function  $g$ on
$\Sigma$ can be computed from

\begin{equation}
\nabla_{\Sigma} g = \frac{g_u G - g_v F}{EG-F^2}\mathbf{x}_u + \frac{g_v E -g_u F}{EG-F^2}\mathbf{x}_v
\end{equation}
where $E, F$, and $G$ are the coefficients of the first fundamental
form and
\begin{equation}
g_u = \frac{\partial g(\mathbf{x}(u,v))}{\partial u} \mbox{ and }
g_v = \frac{\partial g(\mathbf{x}(u,v))}{\partial v}.
\end{equation}
See do Carmo\cite{Docarmo,Docarmo2}  for the details.

Let $\mathbf{X} = A \mathbf{x}_u + B \mathbf{x}_v$  be a local
vector field on $\Sigma$. The divergence, $\nabla_{\Sigma} \cdot
\mathbf{X}$, of $\mathbf{X}$ is defined as a function
$\nabla_{\Sigma} \cdot \mathbf{X} : \Sigma \rightarrow \mathbb{R}$
given by the trace of the linear mapping $\mathbf{Y}(\mathbf{p})
\rightarrow \nabla_{\mathbf{Y}(\mathbf{p})} \mathbf{X}$ for
$\mathbf{p} \in \Sigma$. A direct computation gives

\begin{equation}\label{gradient_X}
\nabla_{\Sigma} \cdot X = \frac{1}{\sqrt{EG-F^2}}\left ( \frac{\partial}{\partial u} ( A\sqrt{EG-F^2})
 + \frac{\partial }{\partial v}(B\sqrt{EG-F^2}) \right ).
\end{equation}
The LB operator acting on the function $g$ is defined by
\begin{equation}
\Delta_{\Sigma} g = \nabla_{\Sigma} \cdot \nabla_{\Sigma}g.
\end{equation}
for all smooth function $g$ on $\Sigma$. A direct computation yields
the following local representation for the LB operator on a smooth
function $g$:
\begin{equation}\label{laplace_g}
\begin{array}{ll}
\Delta_{\Sigma} g = & \frac{1}{\sqrt{EG-F^2}} \left [ \frac{\partial}{\partial u} ( \frac{G}{\sqrt{EG-F^2}} \frac{\partial g}{\partial u})
- \frac{\partial}{\partial u} (\frac{F}{\sqrt{EG-F^2}} \frac{\partial g}{\partial v} ) \right ] \cr\cr
& + \frac{1}{\sqrt{EG-F^2}} \left [ \frac{\partial}{\partial v} ( \frac{E}{\sqrt{EG-F^2}} \frac{\partial g}{\partial v})
- \frac{\partial}{\partial v} (\frac{F}{\sqrt{EG-F^2}} \frac{\partial g}{\partial u} ) \right ]. \end{array}
\end{equation}

\section{An  $O(r)$-LTL configuration method}

In this section, we shall introduce a new algorithm to solve the
eigenpair problems, Equations (\ref{problem_LB_1}) and
(\ref{problem_LB_2}), by the LTL configuration method.

Consider a triangular surface mesh $S=(V,F)$, where
$V=\{\mathbf{v}_i| 1\leq i \leq n_V \}$ is the list of vertices and
$F=\{ T_k | 1 \leq k \leq n_F\}$ is the list of triangles.

To describe the local tangential lifting (LTL) method, we introduce
the approximating tangent plane $TS_A(\mathbf{v})$ and the local
tangential polygon $P_A(\mathbf{v})$ at the vertex $\mathbf{v}$  of
$S$ as follows:
\begin{enumerate}
\item The normal vector  $\mathbf{N}_A(\mathbf{v})$ at the vertex $\mathbf{v}$  in $S$  is given by
\begin{equation}\label{N_A}
\mathbf{N}_A(\mathbf{v}) = \frac{\sum_{T \in T(\mathbf{v})}\omega_T\mathbf{N}_T}{\|\sum_{T \in
T(\mathbf{v})}\omega_T\mathbf{N}_T\|}
\end{equation}
where  $T(\mathbf{v})$ is the set of triangles that contain the
vertex $\mathbf{v}$, $\mathbf{N}_T$ is the unit normal to a
triangle face $T$  with $\langle \mathbf{N}_T, \mathbf{N}_T'
\rangle > 0$ for all $T, T' \in T(\mathbf{v})$ and the centroid
weight is given in \cite{Chen,Chen2} by
\begin{equation}
\omega_T = \frac{\frac{1}{\|\mathbf{G}_T - \mathbf{v}\|^2}}{\sum_{\tilde{T} \in T(\mathbf{v})}
\frac{1}{\|\mathbf{G}_{\tilde{T}} - \mathbf{v} \|^2}}
\end{equation}
where $\mathbf{G}_T$ is the centroid of the triangle face $T$
determined by
\begin{equation}
\mathbf{G}_T = \frac{\mathbf{v}+\mathbf{v}_i+\mathbf{v}_j}{3}.
\end{equation}
Note that the letter $A$ in the notation $\mathbf{N}_A(v)$
stands for the word "Approximation".

\item The approximating tangent plane $TS_A(\mathbf{v})$  of $S$  at $\mathbf{v}$   is now determined by
$TS_A(\mathbf{v}) = \{ \mathbf{w} \in \mathbb{R}^3 | \mathbf{w}
\bot \mathbf{N}_A(\mathbf{v}) \}$.

\item The local tangential polygon $P_A(\mathbf{v})$  of $\mathbf{v}$ in $TS_A(\mathbf{v})$ is formed
by the vertices $\bar{\mathbf{v}}_i$ which is the lifting vertex
of $\mathbf{v}_i$ adjacent to $\mathbf{v}$  in $V$:
\begin{equation}
\bar{\mathbf{v}}_i = (\mathbf{v}_i - \mathbf{v}) - < \mathbf{v}_i - \mathbf{v},\mathbf{N}_A(\mathbf{v})>\mathbf{N}_A(\mathbf{v})
\end{equation}
as in figure \ref{figure_tangential_polygon}.

\begin{figure}[!t]
\centering
\includegraphics[width=2.5in]{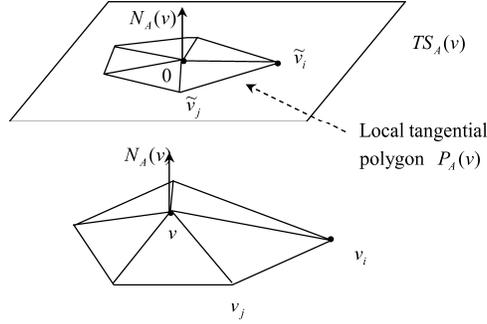}
\caption{The local tangential polygon $P_A(\mathbf{v})$}\label{figure_tangential_polygon}

\end{figure}

\item We can choose an orthonormal basis $\mathbf{e}_1,\mathbf{e}_2$ for the tangent plane
$TS_A(\mathbf{v})$ of $S$ at $\mathbf{v}$ and obtain an
orthonormal coordinates $(x,y)$ for vectors $\mathbf{w} \in
TS_A(\mathbf{v})$ by $\mathbf{w} = x\mathbf{e}_1+y\mathbf{e}_2$.
We set $\bar{\mathbf{v}}_i = x_i\mathbf{e}_1 + y_i\mathbf{e}_2$
with respect to the orthonormal basis $\mathbf{e}_1,
\mathbf{e}_2$.

\end{enumerate}

Now we explain how to lift locally a function defined on $V$  to the
local tangential polygon $P_A(\mathbf{v})$. Consider a function
$\phi$ on $V$. We will lift locally the function $\phi$ to a
function of two variables , denoted by $\bar{\phi}$, on the vertices
$\bar{\mathbf{v}}_i$ in $P_A(\mathbf{v})$ by simply setting
\begin{equation}
\bar{\phi}(x_i,y_i) = \phi(\mathbf{v}_i)
\end{equation}
and $\bar{\phi}(\vec{0}) = \phi(\mathbf{v})$  where  $\vec{0}$ is
the origin of $TS_A(\mathbf{v})$ . Then one can extend the function
$\bar{\phi}$ to be a piecewise linear function on the whole polygon
$P_A(\mathbf{v})$ in a natural and obvious way.

Next, we introduce the configuration matrices for $\Delta_{\Sigma}
\phi$ and $\nabla_{\Sigma} \cdot (h \nabla_{\Sigma} \phi)$ in
Equations (\ref{problem_LB_1}) and (\ref{problem_LB_2}).

\subsection{ The Configuration matrix for $\Delta_{\Sigma}\phi$ }\label{section_configuration_LB_1}
According to  the LTL method, the differential quantities at a point
on curved surfaces  correspond to  planar differential quantities in
$\mathbb{R}^2$. Hence, we only need to estimate the Laplace-Beltrami
operator on planar triangular meshes. Given a $C^3$ function $f$ on
an open domain $\Omega \subset \mathbb{R}^2$ with the origin $(0,0)
\in \Omega$, Taylor's expansion for two variables $x$ and $y$ gives

\begin{equation}\label{Taylor_expansion}
\begin{array}{rl}
f(x,y) = & f(0,0) + xf_x(0,0) + yf_y(0,0) \cr\cr
&  + \frac{x^2}{2}f_{xx}(0,0)+ xyf_{xy}(0,0) \cr\cr
& +\frac{y^2}{2} f_{yy}(0,0) + O(r^3) \end{array}
\end{equation}
when $r= x^2+y^2$ is small.

Consider a family of neighboring points $(x_j,y_j) \in \Omega$,
$j=1,2, \cdots, n$, of the origin $(0,0)$. Take some constants
$\alpha_j$, $j=1,2, \cdots, n$, with $\sum^n_{j=1} \alpha_j =
\epsilon \neq 0$. Then one has
\begin{equation}
\begin{array}{ll}
& \sum\limits_{j=1}^n \alpha_j( f(x_j,y_j) - f(0,0)) \cr\cr
= &(\sum\limits_{j=1}^n \alpha_j x_j)f_x(0,0)+ (\sum\limits_{j=1}^n
\alpha_j y_j)f_y(0,0) \cr\cr

& + \frac{1}{2}(\sum\limits_{j=1}^n \alpha_j x_j^2)f_{xx}(0,0) +
(\sum\limits_{j=1}^n \alpha_j x_j y_j) f_{xy}(0,0) \cr\cr

&+\frac{1}{2}(\sum\limits_{j=1}^n \alpha_j y_j^2) f_{yy}(0,0) +
O(r^3),
\end{array}
\end{equation}
where $r = \max\limits_{j \in \{ 1,2,\cdots,n\} } \{
\sqrt{x_j^2+y_j^2} \}$.  To estimate  the Laplacian, $f_{xx}(0,0) +
f_{yy}(0,0)$, at $(0,0)$,  we choose the constants $\alpha_j$,
$j=1,2, \cdots, n$ with $\sum^n_{j=1} \alpha_j = \epsilon$, so that
they satisfy the following equations:

$$
\begin{array}{l}
  \sum\limits_{j=1}^n \alpha_j x_j = 0, \cr\cr
 \sum\limits_{j=1}^n \alpha_j y_j = 0, \cr\cr
   \sum\limits_{j=1}^n \alpha_j x_j y_j = 0,
\end{array}
$$
 and
$$  \sum\limits_{j=1}^n \alpha_j x_j^2 = \sum\limits_{j=1}^n \alpha_j y_j^2$$
 or equivalently
$$
\sum\limits_{j=1}^n
\alpha_j(x_j^2- y_j^2) = 0
$$

One can rewrite these equations in a matrix form with the condition
$\sum_{j=1}^n \alpha_j = \epsilon$ and obtain the following
equation:

\begin{equation}\label{configuration_LB}
\begin{pmatrix}
x_1, & x_2, & \cdots, & x_n \cr y_1, & y_2, & \cdots, & y_n \cr
x_1y_1, & x_2y_2, & \cdots, & x_ny_n \cr

x_1^2 - y_1^2, & x_2^2 - y_2^2, & \cdots, & x_n^2-y_n^2 \cr 1, & 1,
& \cdots, & 1
\end{pmatrix}\begin{pmatrix} \alpha_1 \cr \alpha_2 \cr \vdots \cr
\alpha_n \end{pmatrix} = \begin{pmatrix} 0 \cr 0 \cr 0 \cr 0 \cr
\epsilon\end{pmatrix}.
\end{equation}
The solutions $\alpha_j$ of this equation allow us to obtain a
formula for the Laplacian $\Delta f(0,0)$:
\begin{equation}\label{laplace_1}
\begin{array}{ll}
\Delta f(0,0) & = f_{xx}(0,0) + f_{yy}(0,0) \cr\cr  & =
\frac{2\sum\limits_{j=1}^n
\alpha_j (f(x_j,y_j)-f(0,0))}{\sum\limits_{j=1}^n \alpha_j x_j^2}
+O(r)
\end{array}
\end{equation}

\begin{Rmk}
\begin{enumerate}
\item Equation (\ref{configuration_LB}) is called the configuration equation  of the Laplace-Beltrami operator.
We call the matrix $\begin{pmatrix} x_i, & y_i, & x_i y_i, &
x_i^2 - y_i^2 \end{pmatrix}^T$ in Equation
(\ref{configuration_LB})  the configuration matrix of
Laplace-Beltrami operator and the solution $\begin{pmatrix}
\alpha_i \end{pmatrix}$ in Equation (\ref{configuration_LB}) the
configuration coefficients of the Laplace-Beltrami operator,
respectively.

\item
For the reason of symmetry, Equation (\ref{laplace_1}) also gives
\begin{equation}\label{laplace_2}
\Delta f(0,0)  = \frac{4\sum\limits_{j=1}^n
\alpha_j (f(x_j,y_j)-f(0,0))}{\sum\limits_{j=1}^n \alpha_j(
x_j^2+y_j^2) } +O(r)
\end{equation}
since we have $\sum\limits_{j=1}^n \alpha_j x_j^2 =
\sum\limits_{j=1}^n \alpha_j y_j^2$ .

\item  For simplicity, the scalar $\epsilon$ in Equation (\ref{configuration_LB}) can be chosen to be $1$.

\item It is worth to point out that Equation (\ref{laplace_2}) is an
generalization of the well-known 5-point Laplacian formula. In
the 5-point Laplacian case, we have the origin $(0,0)$ along
with 4 neighboring points $(s,0)$, ($0,s)$ $(-s,0)$ and $(0,-s)$
for sufficiently small positive number $s$. One can find a
solution $\alpha_j = \frac{1}{4}$ for $j=1,2,3,4$, in this case.
\end{enumerate}
\end{Rmk}

Now,  let $\Sigma$ be a regular surface with a  triangular surface
mesh $S=(V,F)$ of $\Sigma$. The set $V = \{\mathbf{v}_i
|i=1,2,\cdots n_V\}$ is the list of vertices on $S$, $F =\{T_k |
k=1,2,\cdots,n_F\}$ is the list of triangles on $S$ and $N(i)$ is
the number of 1-neighbors of $\mathbf{v}_i$ on $S$.  Suppose that
$\phi$ is a $C^3$ function  on $S$. For each vertex $\mathbf{v}_i$
on $S$, $\{\bar{\mathbf{v}}_{i,j}\}_{j=1}^{N(i)}$ is the tangential
polygon of the neighbors of $\mathbf{v}_i$ with coordinates
$\{(x_{i,j},y_{i,j}) | i=1,2,\cdots N(i) \}$ and
$\phi(x_{i,j},y_{i,j}) = \phi(\mathbf{v}_{i,j})$ is the lifting
function of $\phi$. By the configuration equation of the Laplacian
in Equation (\ref{configuration_LB}), the Laplace-Beltrami operator,
$\Delta_S \phi$ on $S$, is defined by
\begin{equation}\label{laplace_phi}
\Delta_S \phi(\mathbf{v}_i) = \frac{2\sum_{j=1}^{N(i)} \alpha_{i,j}(\phi(x_{i,j}, y_{i,j}) - \phi(0,0) )}{\sum_{j=1}^{N(i)}(\alpha_{i,j}x_{i,j}^2)}.
\end{equation}
Then one can prove
\begin{Thm}\label{convergence_LB}
Given a smooth function $\phi$ on a closed  regular surface $\Sigma$
and a triangular surface mesh $S=(V,F)$  with mesh size $r$, one has
\begin{equation}
\Delta_{\Sigma} \phi(\mathbf{v}) = \Delta_S \phi(\mathbf{v}) +O(r)
\end{equation}
where the discrete LB operator $\Delta_S \phi(\mathbf{v})$ is given
in Equation (\ref{laplace_phi}).
\end{Thm}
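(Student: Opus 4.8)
The plan is to transport the computation to the approximating tangent plane $TS_A(\mathbf{v})$, apply the planar configuration formula (\ref{laplace_1}) that has already been derived for $C^3$ functions, and then pay for the fact that $TS_A(\mathbf{v})$ is only a first–order approximation of the true tangent plane $T\Sigma_{\mathbf{p}}$. First I would fix $r$ small enough that, by compactness of $\Sigma$ and smoothness, near each vertex $\mathbf{v}$ the surface is a single–valued graph over $TS_A(\mathbf{v})$: with the orthonormal frame $\mathbf{e}_1,\mathbf{e}_2,\mathbf{N}_A(\mathbf{v})$ there is a $C^{\infty}$ height function $h$ with $h(0,0)=0$ such that $\mathbf{X}(x,y)=\mathbf{v}+x\mathbf{e}_1+y\mathbf{e}_2+h(x,y)\mathbf{N}_A(\mathbf{v})$ parameterizes a neighborhood of $\mathbf{v}$ in $\Sigma$ containing all $1$-neighbors. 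Put $g(x,y):=\phi(\mathbf{X}(x,y))$, a $C^3$ function on a disk of radius $O(r)$ about the origin. Since each vertex $\mathbf{v}_{i,j}$ lies on $\Sigma$ and its orthogonal projection onto $TS_A(\mathbf{v})$ has coordinates $(x_{i,j},y_{i,j})$, single-valuedness forces $\mathbf{v}_{i,j}=\mathbf{X}(x_{i,j},y_{i,j})$, so the lifted function is exactly $g$ sampled at the tangential polygon vertices: $\bar\phi(x_{i,j},y_{i,j})=g(x_{i,j},y_{i,j})$ and $\bar\phi(\vec 0)=g(0,0)$. Hence the discrete operator (\ref{laplace_phi}) is nothing but the planar configuration quotient of $g$.

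Applying (\ref{laplace_1}) to $g$ then gives
\[
\Delta_S\phi(\mathbf{v}) \;=\; g_{xx}(0,0)+g_{yy}(0,0)+O(r),
\]
where the remainder is governed by $\max_j\sqrt{x_{i,j}^2+y_{i,j}^2}=O(r)$. For this step I must also record the admissibility hypotheses hidden in the construction: the configuration equation (\ref{configuration_LB}) is solvable, the coefficients satisfy $\alpha_{i,j}=O(1)$, and $\sum_j\alpha_{i,j}x_{i,j}^2\asymp r^2$ uniformly over the mesh. The cleanest way to see the last two is to rescale $x_{i,j},y_{i,j}$ by $r$, turning (\ref{configuration_LB}) into a fixed-size linear system with order-one entries whose inverse is uniformly bounded under a non-degeneracy assumption on the triangulation; without the lower bound $\sum_j\alpha_{i,j}x_{i,j}^2\gtrsim r^2$ the cubic Taylor remainder would not be absorbed into $O(r)$.

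Next I would compare $g_{xx}(0,0)+g_{yy}(0,0)$ with $\Delta_{\Sigma}\phi(\mathbf{v})$ by writing the Laplace–Beltrami operator in the coordinates $(x,y)$ via (\ref{laplace_g}). The first fundamental form of $\mathbf{X}$ has coefficients $E=1+h_x^2$, $F=h_x h_y$, $G=1+h_y^2$, and the slopes $h_x(0,0),h_y(0,0)$ measure the tilt of $TS_A(\mathbf{v})$ relative to $T\Sigma_{\mathbf{p}}$. The centroid-weighted normal estimate of \cite{Chen,Chen2} gives $\|\mathbf{N}_A(\mathbf{v})-\mathbf{N}(\mathbf{p})\|=O(r)$, hence $h_x(0,0),h_y(0,0)=O(r)$, while $h$ and its second derivatives stay uniformly bounded. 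Expanding the outer $\partial_u,\partial_v$ derivatives in (\ref{laplace_g}) and evaluating at the origin, the coefficients of $g_{xx}$ and $g_{yy}$ are $1+O(r^2)$, that of $g_{xy}$ is $O(r^2)$, and those of $g_x$ and $g_y$ are built from first derivatives of $E,F,G$ such as $\partial_xE=2h_xh_{xx}$, which are $O(r)$ at the origin. Since the derivatives of $g$ up to order three are uniformly bounded ($\phi$ smooth, $\Sigma$ compact, $\mathbf{X}$ with bounded jets), this yields $\Delta_{\Sigma}\phi(\mathbf{v})=g_{xx}(0,0)+g_{yy}(0,0)+O(r)$. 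Combining with the previous display gives $\Delta_{\Sigma}\phi(\mathbf{v})=\Delta_S\phi(\mathbf{v})+O(r)$, all constants being uniform over $\Sigma$ by compactness.

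The main obstacle is the comparison in the third step: quantifying the discrepancy between the approximating and true tangent planes and verifying it injects only an $O(r)$ error into the intrinsic formula (\ref{laplace_g}). Everything there reduces to the single quantitative input $\|\mathbf{N}_A-\mathbf{N}\|=O(r)$ (quoted from \cite{Chen,Chen2}) plus a careful but routine expansion. The secondary delicate point, as noted above, is the uniform non-degeneracy and $r^2$-scaling of the configuration coefficients, which must be stated explicitly as a regularity condition on the mesh.
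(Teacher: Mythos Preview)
Your argument is correct, but it is organized differently from the paper's. The paper lifts $\phi$ to the \emph{true} tangent plane $T\Sigma(\mathbf{v})$ via the exponential map, so that the identity $\Delta_\Sigma\phi(\mathbf{v})=\Delta\tilde\phi(0,0)$ holds \emph{exactly} in geodesic normal coordinates (their Lemma~\ref{Lemma1}); it then compares the two liftings, showing $\mathbf{N}_A=\mathbf{N}_\Sigma+O(r)$, $\mathbf{e}_i=\tilde{\mathbf{e}}_i+O(r)$, and $(x_i,y_i)=(\tilde x_i,\tilde y_i)+O(r^2)$ (Lemmas~\ref{Lemma2}--\ref{Lemma4}), which yields $\alpha_i=\tilde\alpha_i+O(r)$ and hence the claim. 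You instead stay entirely in the approximating tangent plane, parametrize $\Sigma$ as a graph over $TS_A(\mathbf{v})$, and absorb the tilt directly into the coordinate expression~(\ref{laplace_g}) via $h_x(0,0),h_y(0,0)=O(r)$. The tradeoff: the paper's route isolates a clean geometric fact (Lemma~\ref{Lemma1}) and pushes all analytic error into a coordinate comparison, while your route is more elementary (no exponential map) and dovetails with the graph-based high-order machinery of Section~4, at the price of a slightly longer expansion of~(\ref{laplace_g}). Both rely on the same quantitative input $\|\mathbf{N}_A-\mathbf{N}_\Sigma\|=O(r)$ from~\cite{Chen,Chen2}. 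Your explicit flagging of the mesh-regularity hypotheses ($\alpha_{i,j}=O(1)$ and $\sum_j\alpha_{i,j}x_{i,j}^2\asymp r^2$) is a genuine addition; the paper uses these implicitly but does not state them.
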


We will prove Theorem \ref{convergence_LB} by the following $4$
Lemmas. Indeed, Theorems \ref{thm3} and \ref{thm4} in this section
can also be proved in a similar way.

Given a smooth function $h$ on a regular surface $\Sigma$, we can
lift $h$ via the exponential map $\exp_\mathbf{p}$ locally to obtain
a smooth function $\hat{h}$ defined on $W \in T\Sigma(\mathbf{p})$
by setting
\begin{equation}
\hat{h}(\mathbf{w}) = h(\exp_{\mathbf{p}}(\mathbf{w}))
\end{equation}
for $\mathbf{w} \in W$. Fix an orthonormal basis
$\tilde{\mathbf{e}}_1, \tilde{\mathbf{e}}_2$
 for the tangent space  $T\Sigma(\mathbf{p})$. This gives us a coordinate system on  $T\Sigma(\mathbf{p})$.
Namely, for $\mathbf{w}\in W$ we have
$\mathbf{w}=x\tilde{\mathbf{e}}_1+y\tilde{\mathbf{e}}_2$ for two
constants $x$ and $y$. Without loss of ambiguity, we can identify
the vector $\mathbf{w} \in W$ with the vector $(x,y)$ with respect
to the orthonormal basis $\tilde{\mathbf{e}}_1,
\tilde{\mathbf{e}}_2$. In this way, the function $\hat{h}$ can also
give us a smooth function $\tilde{h}$ of two variables $x$ and $y$
by defining
\begin{equation}\label{tilde_h}
\tilde{h}(x,y) = \hat{h}(\mathbf{w})
\end{equation}
for $\mathbf{w} = x\tilde{\mathbf{e}}_1+y\tilde{\mathbf{e}}_2$.
Using these notations, we will prove

\begin{Lem}\label{Lemma1}
One has
 \begin{equation} \Delta_{\Sigma} h(\mathbf{p}) = \Delta \hat{h}(0) =
\Delta \tilde{h}(0,0).
\end{equation}
\end{Lem}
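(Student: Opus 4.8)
The plan is to realize $\hat h$ and $\tilde h$ through geodesic normal coordinates centered at $\mathbf p$ and then read off the statement from the local formula (\ref{laplace_g}) for the Laplace--Beltrami operator. The second equality $\Delta\hat h(0) = \Delta\tilde h(0,0)$ is purely notational: $\tilde h$ is just $\hat h$ written in the linear coordinates $(x,y)$ attached to the orthonormal basis $\tilde{\mathbf e}_1,\tilde{\mathbf e}_2$ via $\mathbf w = x\tilde{\mathbf e}_1 + y\tilde{\mathbf e}_2$, so its Euclidean Laplacian at the origin coincides with $\Delta\hat h(0)$ by definition (\ref{tilde_h}). The genuine content is the first equality $\Delta_\Sigma h(\mathbf p) = \Delta\tilde h(0,0)$.

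For that I would take the parameterization $\mathbf x(u,v) = \exp_{\mathbf p}(u\tilde{\mathbf e}_1 + v\tilde{\mathbf e}_2)$ of $\Sigma$ near $\mathbf p$, so that $h(\mathbf x(u,v)) = \tilde h(u,v)$, and invoke two standard properties of normal coordinates. First, since $d(\exp_{\mathbf p})_0 = \mathrm{id}$, one has $\mathbf x_u(0,0) = \tilde{\mathbf e}_1$ and $\mathbf x_v(0,0) = \tilde{\mathbf e}_2$, hence the coefficients of the first fundamental form satisfy $E(0,0) = G(0,0) = 1$, $F(0,0) = 0$, and $\sqrt{EG - F^2} = 1$ at $\mathbf p$. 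Second --- and this is the only geometric input --- because each radial curve $t \mapsto \exp_{\mathbf p}(t\mathbf w)$ is a geodesic, all Christoffel symbols vanish at $\mathbf p$, equivalently $E_u = E_v = F_u = F_v = G_u = G_v = 0$ at $(0,0)$; this is classical (see do Carmo\cite{Docarmo,Docarmo2}).

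Then I would substitute these values into (\ref{laplace_g}) and expand the outer $\partial_u,\partial_v$ by the Leibniz rule. Every term that differentiates one of the factors $G/\sqrt{EG-F^2}$, $F/\sqrt{EG-F^2}$, $E/\sqrt{EG-F^2}$ produces, at the origin, a factor among the first derivatives of $E,F,G$ (directly or through $\partial(EG-F^2)$) and therefore vanishes; the $F$-contributions vanish as well because both $F$ and its first derivatives are zero at $(0,0)$. What survives, after using $E(0,0) = G(0,0) = 1$ and the overall prefactor $1/\sqrt{EG-F^2} = 1$, is exactly $g_{uu}(0,0) + g_{vv}(0,0)$. Applying this with $g = h\circ\mathbf x = \tilde h$ yields $\Delta_\Sigma h(\mathbf p) = \tilde h_{uu}(0,0) + \tilde h_{vv}(0,0) = \Delta\tilde h(0,0) = \Delta\hat h(0)$.

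The main obstacle --- and really the only step that is not bookkeeping --- is the vanishing of the first derivatives of the metric at the center of normal coordinates. To keep the note self-contained I would include the two-line justification: inserting the radial geodesic $\gamma(t) = (ta, tb)$ into $\ddot\gamma^{\,k} + \Gamma^k_{ij}\dot\gamma^{\,i}\dot\gamma^{\,j} = 0$ gives $\Gamma^k_{ij}(0)\,a^i a^j = 0$ for every $(a,b)$, whence $\Gamma^k_{ij}(0) = 0$ by the symmetry in $i,j$, and this is equivalent to $\partial_\ell g_{ij}(0) = 0$ for all indices.
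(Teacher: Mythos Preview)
Your proof is correct but takes a different route from the paper's. The paper quotes as ``well-known'' (citing do Carmo \cite{Docarmo2}) that $\Delta_\Sigma h(\mathbf p)$ equals the sum of the second derivatives of $h$ along any two perpendicular unit-speed geodesics through $\mathbf p$, and then simply applies this to the radial geodesics $c_i(t) = \exp_{\mathbf p}(t\tilde{\mathbf e}_i)$, $i=1,2$, which yields $\tilde h_{xx}(0,0) + \tilde h_{yy}(0,0)$ in one line. You instead feed normal coordinates into the explicit local formula (\ref{laplace_g}) and use the vanishing of $\partial_\ell g_{ij}(0)$ to kill all the lower-order terms. Your approach is more self-contained --- you actually justify the key geometric input (vanishing Christoffel symbols at the center) rather than citing it --- and it makes transparent exactly which pieces of (\ref{laplace_g}) survive; the paper's version is terser and more conceptual once the geodesic characterization of $\Delta_\Sigma$ is granted. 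Underneath, both arguments rest on the same fact, since that characterization is itself equivalent to the vanishing of the Christoffel symbols at the origin of normal coordinates.
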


\begin{proof}
It is well-known that the LB operator $\Delta_{\Sigma}
h(\mathbf{p})$ acting on a smooth  function $h$ at a point
$\mathbf{p}$ can be computed from the second derivatives of $h$
along any two perpendicular geodesics with unit speed. See do  Carmo
\cite{Docarmo2} for details. Indeed, we consider the following two
perpendicular geodesics with unit speed in $\Sigma$ by using the
orthonormal vectors $\tilde{\mathbf{e}}_1, \tilde{\mathbf{e}}_2$:
\begin{equation}
c_i(t) = \exp_{\mathbf{p}} (t\tilde{\mathbf{e}}_i), ~~ i=1,2
\end{equation}
with $c_i(0) = \mathbf{p}$  and $\frac{dc_i}{dt}(0) =
\tilde{\mathbf{e}}_i$. One has
\begin{equation}
\begin{split}
\Delta_{\Sigma} h(\mathbf{p}) = & \frac{d^2}{dt^2}h(c_1(t))|_{t=0} +
\frac{d^2}{dt^2}h(c_2(t))|_{t=0} \\
= & \frac{d^2}{dt^2}\hat{h}(t\tilde{\mathbf{e}}_1)|_{t=0} +
\frac{d^2}{dt^2}\hat{h}(t\tilde{\mathbf{e}}_2)|_{t=0} \\
= & \Delta \hat{h}(\vec{0}) \\
= & \frac{\partial^2 \tilde{h}}{\partial x^2}(0,0) +
\frac{\partial^2 \tilde{h}}{\partial y^2}(0,0) \\
= & \Delta \tilde{h}(0,0).
\end{split}
\end{equation}
\end{proof}

Next we consider a triangular surface mesh $S=\{V,F\}$ for the
regular surface $\Sigma$, where $V=\{v_i|1 \leq i \leq n_V\}$  is
the list of vertices and $F = \{T_k \ 1 \leq k \leq n_F \}$ is the
list of triangles and the mesh size is less than $r$. Fix a vertex
$\mathbf{v}$ in $V$. For each face $T \in F$ containing
$\mathbf{v}$, we have
\begin{equation}
\mathbf{N}_{\Sigma}(\mathbf{v}) = \mathbf{N}_T + O(r)
\end{equation}
where $\mathbf{N}_{\Sigma} (\mathbf{v})$ is the unit normal vector
of the true tangent plane $T\Sigma (\mathbf{v})$ of  $\Sigma$ at
$\mathbf{v}$ and $\mathbf{N}_T$ is the unit normal vector of the
face $T$. Since the approximating normal vector
$\mathbf{N}_A(\mathbf{v})$, defined in section 3 is a weighted sum
of these neighboring face normals $\mathbf{N}_T$, we have
\begin{Lem}\label{Lemma2}
One has
\begin{equation}\label{equ_lemma2}
\mathbf{N}_{\Sigma} (\mathbf{v}) = \mathbf{N}_A{\mathbf{v}} + O(r).
\end{equation}
\end{Lem}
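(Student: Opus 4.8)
The plan is to show that the weighted average of neighboring face normals, which defines $\mathbf{N}_A(\mathbf{v})$, agrees with the true surface normal $\mathbf{N}_\Sigma(\mathbf{v})$ up to first order in the mesh size $r$. The starting point is the relation already recorded just before the statement: for each triangle $T \in T(\mathbf{v})$ one has $\mathbf{N}_\Sigma(\mathbf{v}) = \mathbf{N}_T + O(r)$. This holds because the three vertices of $T$ lie on $\Sigma$ within distance $O(r)$ of $\mathbf{p}=\mathbf{v}$, so by a Taylor expansion of a local graph representation of $\Sigma$ over its tangent plane $T\Sigma(\mathbf{v})$, the plane through those three vertices has a normal that deviates from $\mathbf{N}_\Sigma(\mathbf{v})$ by an amount controlled by the second fundamental form times $r$, i.e.\ $O(r)$.

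From there the argument is essentially linearity of the $O(r)$ estimate under convex combinations. Write $\mathbf{M} := \sum_{T \in T(\mathbf{v})} \omega_T \mathbf{N}_T$, where the weights $\omega_T$ are nonnegative and sum to $1$. Substituting $\mathbf{N}_T = \mathbf{N}_\Sigma(\mathbf{v}) + O(r)$ into this sum and using $\sum_T \omega_T = 1$ gives
\begin{equation}
\mathbf{M} = \mathbf{N}_\Sigma(\mathbf{v}) + \sum_{T \in T(\mathbf{v})} \omega_T \, O(r) = \mathbf{N}_\Sigma(\mathbf{v}) + O(r),
\end{equation}
since a convex combination of quantities each bounded by $Cr$ is itself bounded by $Cr$ (here one needs that the number of triangles $|T(\mathbf{v})|$ stays bounded, or simply that the $O(r)$ error terms are uniformly bounded — this is where a mild regularity/shape assumption on the mesh enters). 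In particular $\|\mathbf{M}\| = 1 + O(r)$, so $\mathbf{M}$ is not the zero vector for $r$ small, and its normalization is well defined.

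The last step is to pass from $\mathbf{M}$ to the unit vector $\mathbf{N}_A(\mathbf{v}) = \mathbf{M}/\|\mathbf{M}\|$. Since $\|\mathbf{M}\| = 1 + O(r)$, we have $1/\|\mathbf{M}\| = 1 + O(r)$, hence
\begin{equation}
\mathbf{N}_A(\mathbf{v}) = \frac{\mathbf{M}}{\|\mathbf{M}\|} = \bigl(\mathbf{N}_\Sigma(\mathbf{v}) + O(r)\bigr)\bigl(1 + O(r)\bigr) = \mathbf{N}_\Sigma(\mathbf{v}) + O(r),
\end{equation}
which is exactly \eqref{equ_lemma2}. Alternatively, one can normalize more carefully by noting that for two unit-ish vectors whose difference is $O(r)$, the angle between them is $O(r)$, and renormalizing the approximate one changes it by a further $O(r)$; either way the conclusion is the same.

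I expect the only real obstacle to be the uniformity of the implied constants: the bound $\mathbf{N}_T = \mathbf{N}_\Sigma(\mathbf{v}) + O(r)$ must hold with a constant independent of which triangle $T$ and which vertex $\mathbf{v}$ we pick, and the convex-combination step implicitly uses that the valence of vertices (the size of $T(\mathbf{v})$) does not blow up as $r \to 0$. Under the standard assumption that the triangulation is regular in the usual sense — bounded aspect ratios and bounded valence — this is automatic, and the rest is the elementary manipulation above. Everything else is a routine Taylor estimate plus the smoothness of the normalization map $\mathbf{w} \mapsto \mathbf{w}/\|\mathbf{w}\|$ away from the origin.
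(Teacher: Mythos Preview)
Your proposal is correct and follows essentially the same approach as the paper: the paper records $\mathbf{N}_\Sigma(\mathbf{v}) = \mathbf{N}_T + O(r)$ for each incident face $T$ and then simply observes that $\mathbf{N}_A(\mathbf{v})$ is a weighted sum of these face normals. Your write-up is in fact more complete than the paper's, since you spell out the convex-combination step, the normalization $\mathbf{M}/\|\mathbf{M}\|$, and the uniformity caveats, none of which the paper makes explicit.
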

Due to this lemma, the orthonormal basis $\tilde{\mathbf{e}}_1,
\tilde{\mathbf{e}}_2$ for the tangent plane $T\Sigma(\mathbf{v})$
will give us an orthonormal basis $\mathbf{e}_1,\mathbf{e}_2$ for
the approximating tangent space $T\Sigma_A(\mathbf{v})=\{ \mathbf{w}
\in \mathbb{R}^3 | \mathbf{w} \bot \mathbf{N}_A(\mathbf{v}) \}$ by
the Gram-Schmidt process in linear algebra:
$$ \mathbf{e}_1 = \frac{\tilde{\mathbf{e}}_1 -<\tilde{\mathbf{e}}_1, \mathbf{N}_A(\mathbf{v})>\mathbf{N}_A(\mathbf{v})}{\|\tilde{\mathbf{e}}_1 -<\tilde{\mathbf{e}}_1,
\mathbf{N}_A(\mathbf{v})>\mathbf{N}_A(\mathbf{v})\|},$$
and
$$\mathbf{e}_2= \frac{\tilde{\mathbf{e}}_2 -<\tilde{\mathbf{e}}_2, \mathbf{N}_A(\mathbf{v})>\mathbf{N}_A(\mathbf{v})
-
<\tilde{\mathbf{e}_2},\mathbf{e}_1>\mathbf{e}_1}{\|\tilde{\mathbf{e}}_2
-<\tilde{\mathbf{e}}_2,
\mathbf{N}_A(\mathbf{v})>\mathbf{N}_A(\mathbf{v}) -
<\tilde{\mathbf{e}_2},\mathbf{e}_1>\mathbf{e}_1\|}.$$ Logically
speaking, one can first choose an orthonormal basis
$\mathbf{e}_1,\mathbf{e}_2$ for the approximating tangent space
$TS_A(v)$ and then apply the Gram-Schmidt process to obtain an
orthonormal basis $\tilde{\mathbf{e}}_1, \tilde{\mathbf{e}}_2$ for
the tangent plane $T\Sigma(\mathbf{v})$. In either way, we always
have by Lemma \ref{Lemma2} the following relations.

\begin{Lem}\label{Lemma3}
One has
\begin{equation}\label{equ_lemma3}
\tilde{\mathbf{e}}_i = \mathbf{e}_i + O(r), ~~ i=1,2.
\end{equation}
\end{Lem}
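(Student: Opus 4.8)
The plan is to read the estimate directly off the Gram--Schmidt formulas displayed immediately before the statement, using Lemma \ref{Lemma2} to show that every coefficient subtracted during that process is $O(r)$. The underlying principle is simple: projecting a unit vector that is already nearly orthogonal to $\mathbf{N}_A(\mathbf{v})$ onto the plane $TS_A(\mathbf{v})$ perturbs it by only $O(r)$, and normalizing afterwards changes it by at most $O(r)$ more.

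First I would bound the mixed inner products. Since $\tilde{\mathbf{e}}_i \perp \mathbf{N}_{\Sigma}(\mathbf{v})$ and $\|\tilde{\mathbf{e}}_i\| = 1$, Lemma \ref{Lemma2} gives
\[
\langle \tilde{\mathbf{e}}_i, \mathbf{N}_A(\mathbf{v}) \rangle = \langle \tilde{\mathbf{e}}_i, \mathbf{N}_A(\mathbf{v}) - \mathbf{N}_{\Sigma}(\mathbf{v}) \rangle = O(r), \qquad i = 1, 2 .
\]
Feeding this into the defining formula for $\mathbf{e}_1$, its numerator equals $\tilde{\mathbf{e}}_1 + O(r)$ and its norm equals $\sqrt{1 - \langle \tilde{\mathbf{e}}_1, \mathbf{N}_A(\mathbf{v}) \rangle^2} = 1 + O(r^2)$, which is bounded below by $1/2$ once $r$ is small; hence $\mathbf{e}_1 = \tilde{\mathbf{e}}_1 + O(r)$. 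For $\mathbf{e}_2$ I need, in addition, $\langle \tilde{\mathbf{e}}_2, \mathbf{e}_1 \rangle$; but by the step just completed $\mathbf{e}_1 = \tilde{\mathbf{e}}_1 + O(r)$ and $\tilde{\mathbf{e}}_1 \perp \tilde{\mathbf{e}}_2$, so $\langle \tilde{\mathbf{e}}_2, \mathbf{e}_1 \rangle = O(r)$. Thus the numerator in the formula for $\mathbf{e}_2$ is again $\tilde{\mathbf{e}}_2 + O(r)$ with norm $1 + O(r)$, giving $\mathbf{e}_2 = \tilde{\mathbf{e}}_2 + O(r)$. The alternative order of construction mentioned in the surrounding text (start from $\mathbf{e}_1, \mathbf{e}_2$, then build $\tilde{\mathbf{e}}_1, \tilde{\mathbf{e}}_2$) is handled by the identical argument with the two frames interchanged, which is legitimate because the estimate of Lemma \ref{Lemma2} is symmetric in $\mathbf{N}_{\Sigma}(\mathbf{v})$ and $\mathbf{N}_A(\mathbf{v})$.

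The one point that requires genuine care — and the only real obstacle — is the \emph{uniformity} of the implied constant in $O(r)$: it must be independent of the vertex $\mathbf{v}$. This is exactly where the standing hypotheses enter, namely compactness of $\Sigma$ (so that its curvature and a lower bound on the injectivity radius are uniform) together with whatever mesh-regularity assumption makes the constant in Lemma \ref{Lemma2} uniform over $V$. I would state this dependence explicitly rather than suppress it; granting it, everything above is a one-line Cauchy--Schwarz/Pythagoras estimate and the lemma follows.
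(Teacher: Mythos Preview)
Your proposal is correct and follows exactly the route the paper intends: the paper does not give a formal proof of Lemma~\ref{Lemma3} at all, but merely records the Gram--Schmidt formulas for $\mathbf{e}_1,\mathbf{e}_2$ and asserts that ``in either way, we always have by Lemma~\ref{Lemma2} the following relations.'' You have simply filled in the omitted details---bounding $\langle\tilde{\mathbf{e}}_i,\mathbf{N}_A(\mathbf{v})\rangle$ via Lemma~\ref{Lemma2}, propagating the $O(r)$ through the numerator and the normalization, and handling $\langle\tilde{\mathbf{e}}_2,\mathbf{e}_1\rangle$ inductively---and your remark on uniformity of the constant (via compactness of $\Sigma$) is a legitimate clarification that the paper defers to Remark~4.
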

Consider a neighboring vertex $\mathbf{v}_i$  of $\mathbf{v}$  in
$V$. For $r$ small enough, we can use the inverse of the exponential
map $\exp_{\mathbf{p}}$ to lift the vertex $\mathbf{v}_i$ up to the
tangent plane $T\Sigma(\mathbf{v})$ and obtain

$$\tilde{\mathbf{v}}_i = \exp^{-1}_{\mathbf{v}} (\mathbf{v}_i) \in T\Sigma(\mathbf{v})$$
 and
$$ \tilde{\mathbf{v}}_i = \tilde{x}_i \tilde{\mathbf{e}}_1 + \tilde{y}_i \tilde{\mathbf{e}}_2$$
for some constants. As discussed in section 3, we can also lift the
vertex $\mathbf{v}_i$ up to the approximating tangent plane
$T\Sigma_A(\mathbf{v})$ and get

$$ \bar{\mathbf{v}}_i = (\mathbf{v}_i - \mathbf{v}) - <\mathbf{v}_i - \mathbf{v}, \mathbf{N}_A(\mathbf{v})>\mathbf{N}_A(\mathbf{v})$$
and for some constants $x_i, y_i$. Then Lemmas \ref{Lemma2} and
\ref{Lemma3} yield
\begin{Lem}\label{Lemma4}
One has
\begin{equation}
\left \{ \begin{split}
\tilde{x}_i & = x_i + O(r^2) \\
\tilde{y}_i & = y_i + O(r^2).
\end{split}\right.
\end{equation}
\end{Lem}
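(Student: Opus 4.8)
The plan is to show that both lifts of the neighbor $\mathbf{v}_i$, namely the geodesic lift $\tilde{\mathbf{v}}_i=\exp^{-1}_{\mathbf{v}}(\mathbf{v}_i)\in T\Sigma(\mathbf{v})$ and the orthogonally projected lift $\bar{\mathbf{v}}_i\in T\Sigma_A(\mathbf{v})$, each differ from the ordinary orthogonal projection of the chord vector $\mathbf{d}_i:=\mathbf{v}_i-\mathbf{v}$ onto the true tangent plane $T\Sigma(\mathbf{v})$ by only $O(r^2)$, and then to convert this vectorial comparison into the coordinate statement via Lemma~\ref{Lemma3}. Throughout, the hypothesis that $\mathbf{v}_i$ is a $1$-neighbor of $\mathbf{v}$ in a mesh of size $r$ (with the usual shape regularity) provides $\|\mathbf{d}_i\|\le Cr$, and hence $\|\tilde{\mathbf{v}}_i\|=O(r)$, with a constant $C$ independent of the chosen vertex; we also take $r$ small enough that $\exp^{-1}_{\mathbf{v}}$ is defined on the neighbors.

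First I would compare $\tilde{\mathbf{v}}_i$ with $\mathbf{u}_i:=P_\Sigma\mathbf{d}_i$, where $P_\Sigma=I-\mathbf{N}_\Sigma(\mathbf{v})\mathbf{N}_\Sigma(\mathbf{v})^{T}$ is the orthogonal projection onto $T\Sigma(\mathbf{v})$. Since $\Sigma$ is smooth, the exponential map at $\mathbf{v}$ expands as $\exp_{\mathbf{v}}(\mathbf{w})=\mathbf{v}+\mathbf{w}+\tfrac12\mathrm{II}_{\mathbf{v}}(\mathbf{w},\mathbf{w})\mathbf{N}_\Sigma(\mathbf{v})+O(\|\mathbf{w}\|^3)$ for $\mathbf{w}\in T\Sigma(\mathbf{v})$, where the quadratic term is purely normal because a geodesic through $\mathbf{v}$ has acceleration perpendicular to $\Sigma$. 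Applying $P_\Sigma$ to $\mathbf{d}_i=\exp_{\mathbf{v}}(\tilde{\mathbf{v}}_i)-\mathbf{v}$ annihilates the normal term and leaves $\mathbf{u}_i=\tilde{\mathbf{v}}_i+O(\|\tilde{\mathbf{v}}_i\|^3)$, so $\tilde{\mathbf{v}}_i=\mathbf{u}_i+O(r^3)$; even the weaker rate $O(r^2)$ already suffices for what follows.

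Next I would compare the two planes. Let $P_A=I-\mathbf{N}_A(\mathbf{v})\mathbf{N}_A(\mathbf{v})^{T}$ be the orthogonal projection onto $T\Sigma_A(\mathbf{v})$. Both normals are unit vectors and Lemma~\ref{Lemma2} gives $\mathbf{N}_A(\mathbf{v})=\mathbf{N}_\Sigma(\mathbf{v})+O(r)$, so expanding the two rank-one terms yields $\|P_A-P_\Sigma\|=O(r)$ in operator norm. Since $\bar{\mathbf{v}}_i=P_A\mathbf{d}_i$, it follows that $\bar{\mathbf{v}}_i-\mathbf{u}_i=(P_A-P_\Sigma)\mathbf{d}_i=O(r)\cdot O(r)=O(r^2)$, and combining with the previous paragraph, $\bar{\mathbf{v}}_i=\tilde{\mathbf{v}}_i+O(r^2)$ as vectors in $\mathbb{R}^3$. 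To conclude, write $\tilde{x}_i=\langle\tilde{\mathbf{v}}_i,\tilde{\mathbf{e}}_1\rangle$ and $x_i=\langle\bar{\mathbf{v}}_i,\mathbf{e}_1\rangle$; then
$$\tilde{x}_i-x_i=\langle\tilde{\mathbf{v}}_i,\tilde{\mathbf{e}}_1-\mathbf{e}_1\rangle+\langle\tilde{\mathbf{v}}_i-\bar{\mathbf{v}}_i,\mathbf{e}_1\rangle=O(r)\cdot O(r)+O(r^2)=O(r^2),$$
where the first term is estimated by Lemma~\ref{Lemma3}, and the identical computation with $\mathbf{e}_2$ controls $\tilde{y}_i-y_i$.

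I expect the only genuinely geometric ingredient to be the expansion of the exponential map in the second paragraph, i.e. the classical fact that geodesic normal coordinates agree with the ambient orthogonal projection onto $T\Sigma(\mathbf{v})$ up to higher order; everything else is linear-algebra bookkeeping of the $O(r)$ errors furnished by Lemmas~\ref{Lemma2} and~\ref{Lemma3}. The one point requiring care is uniformity: each implied constant should be checked to depend only on curvature bounds of $\Sigma$ and on the mesh regularity, not on the particular vertex or neighbor, since Theorem~\ref{convergence_LB} needs the estimate to hold simultaneously at every vertex.
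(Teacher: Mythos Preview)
Your argument is correct and follows the same outline the paper sketches: the paper simply asserts that Lemmas~\ref{Lemma2} and~\ref{Lemma3} yield Lemma~\ref{Lemma4} without further detail, and your proof supplies exactly those details by comparing both lifts to the common intermediate $P_\Sigma\mathbf{d}_i$ and then passing to coordinates. The one ingredient you make explicit that the paper leaves implicit is the second-order expansion of $\exp_{\mathbf{v}}$ (equivalently, that the geodesic lift and the orthogonal projection onto the true tangent plane agree to better than $O(r^2)$); this is genuinely needed, since Lemmas~\ref{Lemma2} and~\ref{Lemma3} alone only control the difference between the two planes and the two frames, not the difference between the two lifting maps. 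Your remark about uniformity of the constants is also well taken and is consistent with the paper's later Remark that the $O(r)$ error terms depend only on geometric invariants of $\Sigma$.
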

Using these relations, one can solve the configuration equation
(\ref{configuration_LB}) for $(\tilde{x}_i,\tilde{y}_i)$ and
$(x_i,y_i)$ respectively and obtain their corresponding solutions
$\tilde{\alpha}_i$ and $\alpha_i$ with the relation
\begin{equation}
\tilde{\alpha}_i = \alpha_i + O(r).
\end{equation}
Note that the lifting function $\tilde{h}$ is a smooth function of
two variables $x$ and $y$. Equation (\ref{laplace_2}) now gives an
approximation of the Laplacian $\Delta \tilde{h}(0,0)$:
\begin{equation}
\Delta \tilde{h}(0,0) = \frac{4\sum\limits_{i=1}^n \tilde{\alpha}_i(\tilde{h}(x_i,y_i) -
\tilde{h}(0,0))}{\sum\limits_{i=1}^n\tilde{\alpha}_i( \tilde{x_i}^2+\tilde{y_i}^2)} + O(r).
\end{equation}
The relations (\ref{tilde_h}), (\ref{equ_lemma2}) and
(\ref{equ_lemma3}) imply
\begin{equation}
\Delta \tilde{h}(0,0) = \frac{4\sum\limits_{i=1}^n \alpha_i(h(\mathbf{v}_i) -
h(\mathbf{v}))}{\sum\limits_{i=1}^n\alpha_i( x_i^2+y_i^2)} + O(r).
\end{equation}
This along with Lemma \ref{Lemma1} proves Theorem
\ref{convergence_LB}.

For each vertex $\mathbf{v}_i \in V$, we have
\begin{equation}\label{approximate_LB_phi}
\Delta_{\Sigma} \phi(\mathbf{v}_i) = \frac{2\sum_{j=1}^{N(i)} \left (  \alpha_{i,j}(\phi(\mathbf{v}_{i,j})
- \phi(\mathbf{v}_i) )\right ) }{\sum_{j=1}^{N(i)}\left ( \alpha_{i,j} x_{i,j}^2  \right )} + O(r).
\end{equation}
Denote $\omega_i = \frac{2}{\sum_{j=1}^{N(i)} \left ( \alpha_{i,j}
x_{i,j}^2 \right ) }$. Since $\sum_{j=1}^{N(i)} \alpha_{i,j} = 1$,
Equation (\ref{approximate_LB_phi}) can be rewritten as
\begin{equation}
\Delta_{\Sigma} \phi(\mathbf{v}_i) = \omega_i \left [  \left ( \sum_{j=1}^{N(i)} (\alpha_{i,j} \phi(\mathbf{v}_{i,j}) \right ) - \phi(\mathbf{v}_i) \right ] +O(r).
\end{equation}
Furthermore  the vector $ ( \alpha_{i,1}, \alpha_{i,2}, \cdots,
\alpha_{i,N(i)} )$ can be easily extended to a $1\times n_V$ vector
$( a_{i,1}, a_{i,2}, \cdots , a_{i,n_V})$ by
\begin{equation}\label{a_ij}
 a_{i,k} = \left \{ \begin{array}{ll} \alpha_{i,j} & \mbox{ if there exists } j \in \{1,2,\cdots, N(i)\} \mbox{ such that } v_k = v_{i,j}, \cr
 -1 & \mbox{ if } i=k, \cr
 0 & \mbox{ otherwise.} \end{array} \right.
\end{equation}
Obviously,
\begin{equation}
 \Delta_{\Sigma} \phi(\mathbf{v}_i) =  \omega_i  \left [ (a_{i,1}, a_{i,2}, \cdots, a_{i,n_V} ) (\phi(\mathbf{v}_1),
  \phi(\mathbf{v}_2), \cdots, \phi(\mathbf{v}_{n_V}) )^T \right ] + O(r).
\end{equation}

\begin{Rmk}
$\{1, 2, \cdots, n_V \}$ is the set of indices of all vertices in
$V$ and $\mathbf{v}_i$ denotes the ith vertex in $V$. For each $i
\in \{ 1, 2, \cdots, n_V\}$, $\{1, 2, \cdots, N(i)\}$ is the set of
indices of one-neighbors of $\mathbf{v}_i$ and $\mathbf{v}_{i,j}$
denotes the jth one-neighbor of $\mathbf{v}_i$ in $\{
\mathbf{v}_{i,1}, \mathbf{v}_{i,2}, \cdots, \mathbf{v}_{i,N(i)} \}$.
Obviously, every one-neighbor $\mathbf{v}_{i,j}$ of $\mathbf{v}_i$
is corresponding to a unique vertex $\mathbf{v}_k$ in $V$ while $j$
and $k$ may be not equal.
\end{Rmk}

This implies that
\begin{equation}\label{eign_phi_1}
\left (
\begin{array}{c} \Delta_\Sigma \phi(\mathbf{v}_1) \cr \Delta_\Sigma \phi(\mathbf{v}_2) \cr \vdots \cr \Delta_\Sigma \phi(\mathbf{v}_{n_V}) \end{array} \right ) =
W  A \left ( \begin{array}{c} \phi(\mathbf{v}_1) \cr \phi(\mathbf{v}_2) \cr \vdots \cr \phi(\mathbf{v}_{n_V}) \end{array}\right ) + O(r),
\end{equation}
where $W = diag (\omega_1, \cdots, \omega_{n_V})$ and $A=(a_{i,k})$
are two $n_V \times n_V$ matrices. For simplicity, we rewrite
Equation (\ref{eign_phi_1}) as
\begin{equation}
\Delta_{\Sigma} \phi (V) = (WA) \phi(V) + O(r).
\end{equation}
Hence, we have an eigenvalue approximation result by the method
discussed in \cite{Strange1}.

\begin{Thm}\label{thm3}
Let $\Sigma$ be a closed regular surface, $S=(V,F)$ be a triangular
mesh of $\Sigma$ with mesh width $r$. If $\lambda_i$ is the ith
eigenvalue of the Laplace-Beltrami operator on $\Sigma$ and
$\bar{\lambda}_i$ is the ith eigenvalue of the matrix $WA$., then we
have, for sufficiently small $r>0$,
$$ \lambda_i = \bar{\lambda}_i + O(r).$$
\end{Thm}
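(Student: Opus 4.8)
The plan is to realize the discrete eigenvalue problem for $WA$ as an $O(r)$-perturbation of the continuous eigenvalue problem for $\Delta_\Sigma$ and then invoke a standard perturbation-of-eigenvalues argument (the reference \cite{Strange1}). First I would fix notation: let $\{\lambda_i\}$ and $\{\phi^{(i)}\}$ denote the eigenvalues and $L^2(\Sigma)$-orthonormal eigenfunctions of $\Delta_\Sigma$, each $\phi^{(i)}$ being smooth since $\Sigma$ is a closed regular surface. Sampling $\phi^{(i)}$ at the vertices gives a vector $\phi^{(i)}(V)\in\mathbb{R}^{n_V}$. The content of Theorem \ref{convergence_LB}, assembled vertex-by-vertex into Equation (\ref{eign_phi_1}), is exactly
\begin{equation*}
(WA)\,\phi^{(i)}(V) = \lambda_i\,\phi^{(i)}(V) + O(r),
\end{equation*}
so each sampled eigenfunction is an \emph{approximate eigenvector} of $WA$ with residual $O(r)$ measured, say, in the $\ell^\infty$ norm (and hence, after scaling by the mesh, in a discrete $L^2$ norm). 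The task is to convert "approximate eigenvector with small residual" into "true eigenvalue is close".

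The key steps, in order, are: (1) show $WA$ is similar to a symmetric matrix, or is itself diagonalizable with real spectrum, so that eigenvalue perturbation theory applies cleanly — here one uses that $A$ carries the configuration weights which, by the symmetry remark (Equation (\ref{laplace_2})) and the structure of the LTL construction, make $W^{1/2}AW^{-1/2}$ (or an analogous conjugate with the mesh mass matrix) symmetric up to $O(r)$; (2) invoke the Bauer--Fike / Weyl-type estimate: if $v$ is a unit vector with $\|(WA)v-\lambda v\|\le Cr$, then $WA$ has an eigenvalue within $O(r)$ of $\lambda$; (3) run this for each fixed index $i$ to get, for $r$ small, an eigenvalue $\bar\lambda_i$ of $WA$ with $|\lambda_i-\bar\lambda_i|=O(r)$; (4) conversely, use a counting/dimension argument — the first $N$ sampled eigenfunctions are approximately orthonormal in the discrete inner product for $r$ small, hence approximately linearly independent, forcing $WA$ to have at least $N$ eigenvalues clustered near $\{\lambda_1,\dots,\lambda_N\}$ — so that the indexing matches and no spurious eigenvalue intrudes between consecutive $\lambda_i$'s. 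Combining the two directions yields $\lambda_i=\bar\lambda_i+O(r)$ with the correct ordering.

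The main obstacle I expect is step (1) together with the indexing half of step (4): $WA$ is not literally symmetric, so one must identify the right inner product (presumably a lumped-mass inner product with weights related to $\omega_i^{-1}$, i.e. to $\sum_j\alpha_{i,j}x_{i,j}^2$) in which $WA$ is self-adjoint up to $O(r)$, and then verify that the sampled continuous eigenfunctions remain asymptotically orthonormal in that discrete inner product — this requires a quadrature estimate, $\frac{1}{n_V}\sum_k \phi^{(i)}(\mathbf v_k)\phi^{(j)}(\mathbf v_k)\,\to\,\int_\Sigma \phi^{(i)}\phi^{(j)}$, or rather the weighted version, with an $O(r)$ rate. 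Once self-adjointness and asymptotic orthonormality are in hand, the matching of eigenvalue indices and the two-sided $O(r)$ bound follow from the minimax characterization of eigenvalues exactly as in \cite{Strange1}; these latter steps are routine. A secondary technical point is uniformity: the constants in the $O(r)$ bounds from Lemmas \ref{Lemma1}--\ref{Lemma4} must be uniform over all vertices, which is where closedness (compactness) of $\Sigma$ and a shape-regularity hypothesis on the mesh are used.
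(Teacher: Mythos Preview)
Your proposal follows exactly the paper's route: assemble the consistency estimate $(WA)\phi(V)=\Delta_\Sigma\phi(V)+O(r)$ from Theorem~\ref{convergence_LB} and Equation~(\ref{eign_phi_1}), then invoke \cite{Strange1} for the eigenvalue perturbation step. The paper itself offers no argument beyond the single sentence ``Hence, we have an eigenvalue approximation result by the method discussed in \cite{Strange1},'' so your sketch --- including the obstacles you correctly flag about the non-symmetry of $WA$, the discrete inner product, and the matching of eigenvalue indices --- is already more detailed than the original.
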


\begin{Rmk}
The matrix equation

\begin{equation}\label{configuration_matrix_LB}
\left (
\begin{array}{c} \Delta_\Sigma \phi(\mathbf{v}_1) \cr \Delta_\Sigma \phi(\mathbf{v}_2) \cr \vdots \cr \Delta_\Sigma \phi(\mathbf{v}_{n_V}) \end{array} \right ) =
W  A \left ( \begin{array}{c} \phi(\mathbf{v}_1) \cr \phi(\mathbf{v}_2) \cr \vdots \cr \phi(\mathbf{v}_{n_V}) \end{array}\right )
\end{equation}
 is called the configuration equation of the
Laplace-Beltrami operator at $\mathbf{v}_i$ on $S$. The constants
$\alpha_{i,j}$ are called the configuration coefficients of
Laplace-Beltrami operator at $\mathbf{v}_i$ on $S$. The $n_V \times
n_V$ matrix $A$ defined in Equation (\ref{a_ij}) is called the
configuration matrix of the Laplace-Beltrami operator on $S$.
\end{Rmk}

\subsection{ The Configuration matrix for $\nabla_{\Sigma}\cdot (h \nabla_{\Sigma}\phi)$ }\label{section_configuration_LB_2}
Let $h$ be a bounded smooth function defined on a regular surface
$\Sigma$. We introduce the configuration matrix of the quantity
$\nabla_{\Sigma}\cdot (h \nabla_{\Sigma}\phi)$ by a similar method
as in subsection \ref{section_configuration_LB_1}. First, let us
consider two smooth functions $f$ and $g$ defined on an open domain
$\Omega$ in $\mathbb{R}^2$ with the original point $(0,0) \in
\Omega$. Since $\nabla_{\Sigma}\cdot (h \nabla_{\Sigma}\phi) =
\nabla_{\Sigma} h \cdot \nabla_{\Sigma} \phi + h \Delta_{\Sigma}
\phi$, we need to estimate the quantity $ \nabla g \cdot \nabla f +
g \Delta f = g_xf_x+g_yf_y + g(f_{xx}+f_{yy})$ in $\mathbb{R}^2$.
Taylor expansions of $f$ and $g$ are given by

\begin{equation}\label{Taylor_expansion_f2}
\begin{array}{rl}
f(x,y) - f(0,0) = &   xf_x(0,0) + yf_y(0,0) \cr\cr
&  + \frac{x^2}{2}f_{xx}(0,0)+ xyf_{xy}(0,0) \cr\cr
& +\frac{y^2}{2} f_{yy}(0,0) + O(r^3) \end{array}
\end{equation}
and
\begin{equation}\label{Taylor_expansion_fg}
\left \{
\begin{array}{rl}
f(x,y) - f(0,0) = xf_x(0,0) + y f_y(0,0) +O(r^2) \cr\cr
g(x,y) - g(0,0) = xg_x(0,0) + y g_y(0,0) + O(r^2),
\end{array} \right.
\end{equation}
when $r= x^2+y^2$ is small. From Equation
(\ref{Taylor_expansion_fg}), one has
\begin{equation}\label{Taylor_expansion_fg2}
\begin{array}{ l}
   \left ( f(x,y)-f(0,0) \right ) \left ( g(x,y)-g(0,0) \right ) \cr\cr
 =   \left ( xf_x(0,0) + y f_y(0,0) \right ) \left (  xg_x(0,0) + y g_y(0,0) \right ) +O(r^3) \cr\cr
 =   x^2f_x(0,0)g_x(0,0) + xy\left ( f_x(0,0)g_y(0,0) + f_y(0,0)g_x(0,0) \right )\cr\cr
   ~~ + y^2f_y(0,0)g_y(0,0)+O(r^3).
 \end{array}
\end{equation}
These imply
\begin{equation}
\begin{array}{ l}
 \left ( f(x,y)-f(0,0) \right ) \left ( g(x,y)-g(0,0) \right ) + 2g(0,0)(f(x,y)-f(0,0)) \cr\cr
 =   2xg(0,0)f_x(0,0) +2yg(0,0)f_y(0,0) + xy (2g(0,0)f_{xy}(0,0)+ f_x(0,0)g_y(0,0)  \cr\cr
   ~~ + f_y(0,0)g_x(0,0) ) + x^2(g(0,0)f_{xx}(0,0)+  f_x(0,0)g_x(0,0) ) \cr\cr
   ~~   + y^2(g(0,0)f_{yy}(0,0) +  f_y(0,0)g_y(0,0) ) +O(r^3).
  \end{array}
\end{equation}
Consider a family of neighboring points $(x_j, y_j) \in \Omega$,¸
Ħ$j = 1, 2, \cdots, n$, of the origin $(0, 0)$. Take some constants
ƒ¿$\beta_j, j = 1, 2, \cdots, n$ with $\sum^n_{j=1} \beta_j =1$. We
have
\begin{equation}
\begin{array}{ l}
 \sum\limits_{j=1}^n \left [  \left ( f(x_i,y_i)-f(0,0) \right ) \left ( g(x_i,y_i)-g(0,0) \right ) + 2g(0,0)(f(x_i,y_i)-f(0,0)) \right ] \cr\cr
 =  (\sum\limits_{j=1}^n \beta_j x_j ) 2g(0,0)f_x(0,0) + (\sum\limits_{j=1}^n \beta_j y_j) 2g(0,0)f_y(0,0) \cr\cr
  ~~  + (\sum\limits_{j=1}^n \beta_j x_j y_j) (2g(0,0)f_{xy}(0,0)+ f_x(0,0)g_y(0,0)  + f_y(0,0)g_x(0,0) ) \cr\cr
 ~~  + ( \sum\limits_{j=1}^n \beta_j x_j^2)(g(0,0)f_{xx}(0,0)+   f_x(0,0)g_x(0,0) ) \cr\cr
 ~~  + (\sum\limits_{j=1}^n \beta_j y_j^2) (g(0,0)f_{yy}(0,0) + f_y(0,0)g_y(0,0) ) +O(r^3).
  \end{array}
\end{equation}
To compute $\nabla g \cdot \nabla f + g \Delta f$ at $(0,0)$, we
choose the constants $\beta_j$, $j = 1, 2, \cdots, n$, so that they
satisfy the following equations:

\begin{equation}\label{configuration_LB2}
\begin{pmatrix}
x_1, & x_2, & \cdots, & x_n \cr y_1, & y_2, & \cdots, & y_n \cr
x_1y_1, & x_2y_2, & \cdots, & x_ny_n \cr

x_1^2 - y_1^2, & x_2^2 - y_2^2, & \cdots, & x_n^2-y_n^2 \cr 1, & 1,
& \cdots, & 1
\end{pmatrix}\begin{pmatrix} \beta_1 \cr \beta_2 \cr \vdots \cr
\beta_n \end{pmatrix} = \begin{pmatrix} 0 \cr 0 \cr 0 \cr 0 \cr
1\end{pmatrix},
\end{equation}
The solutions $\beta_j$ of this equation gives a formula for $\nabla
g \cdot \nabla f + g \Delta f$ at $(0,0)$:
\begin{equation}
\begin{array}{ll}
& \left ( \nabla g \cdot \nabla f + g \Delta f \right )(0,0) \cr\cr
= & g_x(0,0)f_x(0,0)+ g_y(0,0)f_y(0,0) +g(0,0)\left [ f_{xx}(0,0)+f_{yy}(0,0) \right ] \cr\cr
= & \frac{1}{\sum\limits_{j=1}^n \beta_j x_j^2 } \sum\limits_{j=1}^n  [  \left ( f(x_j,y_j)-f(0,0) \right ) \left ( g(x_j,y_j)-g(0,0) \right ) \cr\cr
 & + 2g(0,0)(f(x_j,y_j)-f(0,0))  ]   +O(r)   \cr\cr
 = &  \frac{\sum\limits_{j=1}^n    \left ( f(x_j, y_j)-f(0,0) \right ) \left ( g(x_j, y_j) + g(0,0) \right )   }{\sum\limits_{j=1}^n \beta_j x_j^2 } +O(r)
\end{array}
\end{equation}

Using the notations of  subsection \ref{section_configuration_LB_1},
the quantity $\Delta_S (h \Delta_S \phi)$ at $\mathbf{v}_i$ on $S$
is given by
\begin{equation}\label{gen_laplace_phi}
\nabla_S (h \nabla_S \phi) (\mathbf{v}_i) =
\frac{\sum\limits_{j=1}^{N(i)} \left (\beta_{i,j}( f(x_i,y_i)-f(0,0)
)   ( g(x_i,y_i) + g(0,0)   )   \right ) }{\sum\limits_{j=1}^{N(i)}
\beta_{i,j} x_{i,j}^2}. \end{equation}
Then one can prove
\begin{Thm}\label{thm4}
Given two smooth functions $h, \phi$ on a closed regular surface
$\Sigma$ with a triangular surface mesh $S=(V,F)$, one has
\begin{equation}
\nabla_{\Sigma} (h \nabla_{\Sigma} )(\mathbf{v}_i) = \nabla_S (h \nabla_S \phi) (\mathbf{v}_i)  +O(r)
\end{equation}
where the quantity $\nabla_S (h \nabla_S \phi) (\mathbf{v}_i) $ is
given in Equation (\ref{gen_laplace_phi}) and $r$ is the mesh size
of $S$.
\end{Thm}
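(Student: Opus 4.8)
The plan is to mimic exactly the four-lemma argument used to prove Theorem \ref{convergence_LB}, since the structure of the claim is identical: we must show that the discrete quantity defined on the tangential polygon via the $\beta$-configuration coefficients converges to the true value of $\nabla_\Sigma\cdot(h\nabla_\Sigma\phi)$ at each vertex. First I would record the pointwise identity that is the analogue of Lemma \ref{Lemma1}: lifting both $h$ and $\phi$ via the exponential map $\exp_{\mathbf v}$ to the true tangent plane $T\Sigma(\mathbf v)$ produces functions $\tilde h,\tilde\phi$ of two Euclidean variables, and by the same geodesic-normal-coordinate computation one has $\nabla_\Sigma\cdot(h\nabla_\Sigma\phi)(\mathbf v)=\bigl(\nabla\tilde h\cdot\nabla\tilde\phi+\tilde h\,\Delta\tilde\phi\bigr)(0,0)$. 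The key point justifying this is that in normal coordinates the metric is Euclidean to first order at the origin, so $\nabla_\Sigma h\cdot\nabla_\Sigma\phi$ agrees with the flat inner product $\nabla\tilde h\cdot\nabla\tilde\phi$ at $\mathbf p$, and $h\,\Delta_\Sigma\phi$ agrees with $\tilde h\,\Delta\tilde\phi$ by Lemma \ref{Lemma1} applied to $\phi$.

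Next I would invoke Lemmas \ref{Lemma2}, \ref{Lemma3} and \ref{Lemma4} verbatim — they are statements about the geometry of the mesh (approximating normal, approximating tangent basis, lifted neighbor coordinates) and are completely independent of which differential operator we are discretizing. In particular $\tilde x_i=x_i+O(r^2)$, $\tilde y_i=y_i+O(r^2)$, and hence, solving the configuration system \eqref{configuration_LB2} for the two sets of coordinates, $\tilde\beta_i=\beta_i+O(r)$ exactly as for the $\alpha$'s (the coefficient matrix and right-hand side of \eqref{configuration_LB2} are the same as in \eqref{configuration_LB}). Then I would substitute into the formula \eqref{gen_laplace_phi}: using $\tilde h(\tilde x_i,\tilde y_i)=h(\mathbf v_i)$, $\tilde\phi(\tilde x_i,\tilde y_i)=\phi(\mathbf v_i)$, together with the $O(r)$-closeness of $\beta$ to $\tilde\beta$ and of $(x_i,y_i)$ to $(\tilde x_i,\tilde y_i)$, to conclude that the discrete quantity built from the approximating-polygon data equals $\bigl(\nabla\tilde h\cdot\nabla\tilde\phi+\tilde h\,\Delta\tilde\phi\bigr)(0,0)+O(r)$, which by the first step is $\nabla_\Sigma\cdot(h\nabla_\Sigma\phi)(\mathbf v_i)+O(r)$.

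The one genuinely new ingredient, and the step I expect to require the most care, is the two-variable Taylor-expansion estimate already laid out before the statement: one must check that with $\beta_j$ solving the four linear constraints plus normalization, the combination $\sum_j\beta_j\bigl[(f(x_j,y_j)-f(0,0))(g(x_j,y_j)-g(0,0))+2g(0,0)(f(x_j,y_j)-f(0,0))\bigr]$ reproduces $\sum_j\beta_j x_j^2\cdot(\nabla g\cdot\nabla f+g\Delta f)(0,0)$ up to $O(r^3)$, so that after dividing by $\sum_j\beta_j x_j^2=\Theta(r^2)$ the error is genuinely $O(r)$. This is where the symmetry relation $\sum_j\beta_j x_j^2=\sum_j\beta_j y_j^2$ and the vanishing of $\sum_j\beta_j x_j$, $\sum_j\beta_j y_j$, $\sum_j\beta_j x_jy_j$ must all be used; one also needs the mild nondegeneracy assumption that $\sum_j\beta_j x_j^2$ is bounded below by a positive multiple of $r^2$, i.e.\ the one-ring is not degenerate, the same hypothesis implicitly used in Theorem \ref{convergence_LB}. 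Granting that, the remaining bookkeeping is routine, and the conclusion follows exactly as in the proof of Theorem \ref{convergence_LB}.
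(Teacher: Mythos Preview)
Your proposal is correct and follows exactly the route the paper indicates: the paper does not spell out a separate proof of Theorem~\ref{thm4} but simply states that it ``can also be proved in a similar way'' via the same four lemmas used for Theorem~\ref{convergence_LB}, together with the Taylor-expansion computation for $(f,g)$ that precedes the theorem statement. Your write-up supplies precisely those details---the normal-coordinate identity replacing Lemma~\ref{Lemma1}, the verbatim reuse of Lemmas~\ref{Lemma2}--\ref{Lemma4}, and the perturbation of the $\beta$-coefficients---so there is nothing to add.
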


\begin{Rmk}
The error terms $O(r)$ in Theorems \ref{convergence_LB}, \ref{thm3}
and \ref{thm4} depend only on some geometric invariants of $S$ and
the function $f$ since $\Sigma$ is a closed regular surface.
\end{Rmk}

Similarly, We extend these scalars $\beta_{i,j}$ for each $i \in
\{1,2,\cdots, n_V \}$ and for each $j \in \{ 1, 2, \cdots, N(i)\}$
to a $n_V \times n_V $ matrix $B=(b_{i,k})$ with
\begin{equation}\label{gen_a_ij}
 b_{i,k} = \left \{ \begin{array}{ll} \beta_{i,j} & \mbox{ if there exists } j \in \{1, 2, \cdots, N(i)\} \mbox{ such that } v_k = v_{i,j}, \cr
 -1 & \mbox{ if } i=k, \cr
 0 & \mbox{ otherwise.} \end{array} \right. .
\end{equation}
And, we have
\begin{equation}
\left ( \begin{array}{c}
\nabla_{\Sigma} (h \nabla_{\Sigma} \phi)(\mathbf{v}_1) \cr
\nabla_{\Sigma} (h \nabla_{\Sigma} \phi)(\mathbf{v}_2) \cr
\vdots \cr
\nabla_{\Sigma} (h \nabla_{\Sigma} \phi)(\mathbf{v}_{n_V}) \cr
\end{array} \right ) =
WB \left ( \begin{array}{c} \phi(\mathbf{v}_1) \cr \phi (\mathbf{v}_2) \cr \vdots \cr \phi (\mathbf{v}_{n_V}) \end{array}\right )
+ O(r),
\end{equation}
where $W = diag (\omega_1, \cdots \omega_{n_V})$, $\omega_i
=\frac{1}{ \sum_{j=1}^{N(i)} \alpha_{i,j} x_{i,j}^2 }$.

\section{High-order approximations}

In this section we shall discuss how to use the LTL method to obtain
high-order approximations of these differential operators. The ideas
are very simple. First, we shall propose an algorithm to construct a
high-order approximation of the underlying surface . Second, we also
give a method to obtain a high-order approximation of smooth
functions on .Third, using these approximations, we can compute the
differential quantities under consideration with high-order
accuracies.

As before, we consider a triangular surface mesh $S=(V,F)$, of the
smooth surface $\Sigma$ where $V= \{\mathbf{v}_i | 1 \leq i \leq
n_V\}$ with mesh size $r > 0$ is the list of vertices and $F = \{
T_k | 1 \leq k \leq n_F \}$ is the list of triangles. To obtain a
high-order approximation of the underlying surface $\Sigma$ around a
vertex $\mathbf{v}$, we will try to construct a local
parametrization by representing the smooth surface $\Sigma$ as
locally a graph surface around the vertex $\mathbf{v}$. Let
$N_A(\mathbf{v})$ be the approximating normal vector at the vertex
$\mathbf{v}$ in $S$ as $\mathbf{v}$ in (\ref{N_A}). The
approximating tangent plane $TS_A(\mathbf{v})$ of $S$ at
$\mathbf{v}$ is given by $TS_A(\mathbf{v}) = \{ \mathbf{w} \in
\mathbb{R}^3 | \mathbf{w} \bot N_A(\mathbf{v})\}$.

We can choose an orthonormal basis $\mathbf{e}_1, \mathbf{e}_2$ for
the approximating tangent plane  $TS_A(\mathbf{v})$  of $S$  at
$\mathbf{v}$ and obtain an orthonormal coordinates $(x,y)$ for
vectors $\mathbf{w} \in TS_A(\mathbf{v})$ by $\mathbf{w} =
x\mathbf{e}_1 + y \mathbf{e}_2$. The approximating tangent plane is
nearly tangential to the surface and the $z$-coordinate is
orthogonal to the $xy$-plane, the approximating tangent plane
$TS_A(\mathbf{v})$, and corresponds to the height function $h$. That
is, for every point $p$ in  $\Sigma$ around $\mathbf{v}$ we can
assign it an $xyz$-coordinates as follows:

\begin{equation} (\mathbf{p}- \mathbf{v}) - \langle \mathbf{p}-\mathbf{v},
N_A(\mathbf{v}) \rangle N_A(\mathbf{v}) = x(\mathbf{p})\mathbf{e}_1
+ y(\mathbf{p})\mathbf{e}_2
\end{equation}
and

\begin{equation}
h (x(\mathbf{p}), y(\mathbf{p})) = \langle \mathbf{p} -
\mathbf{v}, N_A(\mathbf{v}) \rangle
\end{equation}

The $n$-ring ($n>1$) neighboring vertex $\mathbf{v}_i$  of
$\mathbf{v}$ in $V$ is now given as

\begin{equation}
\begin{array}{ccc}
x_i = x(\mathbf{v}_i), & y_i = y(\mathbf{v}_i), & z_i =h(x_i,y_i)
\end{array}
\end{equation}
in the $xyz$-space. To give locally a high-order surface
reconstruction of $\Sigma$, we only need to find a suitable
polynomial fitting for the height function $h$ with high-order
accuracy by using the local data  $z_i =h(x_i,y_i)$. This can be
done by employing the high-order Taylor expansion again as we did in
the previous section.

The height function $h$ can be approximating to  $k$th-order
accuracy about the origin $\mathbf{o} =(0,0)$ as
\begin{equation}
h(x,y) = \sum_{d=0}^k \sum_{m=0}^d c_{d,m} \frac{x^{d-m}y^m}{(d-m)!m!} + O(r^{d+1})
\end{equation}
where the constant $c_{d,m} = \frac{\partial^d}{\partial x^{d-m}
\partial y^m} h(0,0)$. In particular, one has $c_{0,0} = h(0,0)$. Since the regular surface $\Sigma$ is
smooth, the height function $h$ is locally smooth and has $k+1$
continuous derivatives. We assume that the $\frac{j}{2}$-ring ($j
\geq 2$) neighboring vertices $\mathbf{v}_i$  of $\mathbf{v}$ in $V$
is near the vertex $\mathbf{v}$. Since these vertices $\mathbf{v}_i$
sample the surface $\Sigma$ near the vertex $\mathbf{v}$, their
coordinates in the $xyz$-space allow us to obtain the equation:

\begin{equation}\label{high_h_i}
h(x_i,y_i) - h(0,0) =\sum_{d=0}^k \sum_{m=0}^d e_{d,m} \frac{x_i^{d-m}y_i^m}{(d-m)!m!}
\end{equation}
Let $n=\frac{(k+1)(k+2)}{2}-1$  be the total number of coefficients
$e_{d,m}$. We can choose $n$ nearest neighboring vertices
$\mathbf{v}_i$ of $\mathbf{v}$ to solve the equation
(\ref{high_h_i}) by the configuration method in section 3 and obtain
a set of solution for the coefficients $e_{d,m}$. That is, we can
find constants $\alpha_{s,t}^{i}$ $i=1,2, \cdots, n$ with $(s,t)
\neq (0,0)$ so that

\begin{equation}
\sum_{i=1}^n \alpha_{s,t}^{i} \frac{x_i^{d-m}y_i^m}{(d-m)!m!}
= \left \{ \begin{array}{ll}
1 & \mbox{ when } (d,m) = (s,t) \cr
0 & \mbox{ when } (d,m) \neq (s,t) \end{array}\right.
\end{equation}
Hence we have
\begin{equation}
e_{s,t} = \sum_{i=1}^n \alpha_{s,t}^{i} [ h(x_i,y_i) - h(0,0)]
\end{equation}

Moreover, we have the following approximation:
\begin{equation}
c_{d,m} = e_{d,m} +O(r^{k+1-d}).
\end{equation}
Next we discuss how to approximate a smooth function $\phi$  with
high-order accuracy. Consider a smooth function $\phi$ on $\Sigma$.
We can view $\phi$ as a function of $x$ and $y$ near the vertex
$\mathbf{v}$. That is, we have locally, for a point $\mathbf{p}$
near $\mathbf{v}$ with local coordinates $(x,y,h(x,y))$,

\begin{equation}
\phi(x,y) =\phi(\mathbf{p})
\end{equation}
Again we can use the local data $\phi (x_i,y_i) = \phi (\mathbf{p})$
to approximate the function $\phi$ with  $k$th-order accuracy about
the origin $\mathbf{o} =(0,0)$  by applying the configuration method
to the $k$th-order Taylor expansion of the function $\phi$  as we
just discussed above for the height function $h$.

Once we have the local high-order polynomial approximations of the
height function around the vertex $\mathbf{v} \in \Sigma$ and the
smooth function $\phi$ on $\Sigma$, we can also get high-order
approximation of the normal vectors, curvatures at the vertex
$\mathbf{v}$ and the gradient, Laplacian of $\phi$.

As we just discussed above, the regular surface $\Sigma$  is locally
a graph surface around the vertex  $\mathbf{v} \in \Sigma$. That is,
we can find locally a smooth height function of two variables
$z=h(x,y)$, $(x,y) \in \Omega \subset \mathbb{R}^2$ so that locally
we have the associated graph surface $\Sigma = \{ (x,y,h(x,y)) |
(x,y) \in \Omega \}$ around the vertex $\mathbf{v}$. The local graph
surface $\Sigma$ has a natural parametrization:

\begin{equation}
\mathbf{X}(u,v) = (u,v,h(u,v)), \mbox{~~}(u,v) \in \Omega.
\end{equation}
Hence, we have the tangent vectors
\begin{equation}
\mathbf{X}_u = (1, 0, h_u) \mbox{ and } \mathbf{X}_v = (0, 1, h_v)
\end{equation}
and their derivative
\begin{equation}
\mathbf{X}_{uu} = (0, 0, h_{uu}), \mbox{~} \mathbf{X}_{uv} = (0, 0, h_{uv}) \mbox{ and } \mathbf{X}_{vv} = (0, 0, h_{vv})
\end{equation}
This gives the unit normal vector

\begin{equation}
\mathbf{N}(u,v) = \frac{(-h_u,-h_v,1)}{\sqrt{h_u^2 + h_v^2+1}}, \mbox{~} (u,v) \in \Omega.
\end{equation}
The coefficients of the first fundamental form $E, F, G$ of the
graph surface $\Sigma$ are given by

\begin{equation}
E = 1+h_u^2, \mbox{~} F=h_uh_v, \mbox{~} G = 1+h_v^2
\end{equation}
and hence we have $EG-F^2 = 1+h_u^2+h_v^2$. And, the coefficients
$e, f, g$ of the second fundamental form of  $\Sigma$ are given by

\begin{equation}
e = \frac{h_{uu}}{\sqrt{1+h_u^2+h_v^2}}, \mbox{~} f = \frac{h_{uv}}{\sqrt{1+h_u^2+h_v^2}} \mbox{ and } g = \frac{h_{vv}}{\sqrt{1+h_u^2+h_v^2}}.
\end{equation}
The formulas for Gaussian and mean curvatures are given by

\begin{equation}
K = \frac{h_{uu}h_{vv} - h_{uv}^2}{(1+h_u^2+h_v^2)^2} \mbox{ and } H = \frac{1}{2}\frac{(1+h_u^2)h_{vv} - 2h_uh_vh_uv + (1+h_v)^2h_{uu}}
{(1+h_u^2+h_v^2)^{3/2}}
\end{equation}

Consider a smooth function $\phi$  on the graph surface $\Sigma$.
The gradient $\nabla _{\Sigma} \phi$  of  $\phi$ can be computed
from equation (\ref{gradient_X}) and one yields

\begin{equation}
\nabla _{\Sigma} \phi = \frac{(\phi_u(1+h_u^2)-\phi_v h_u h_v, \phi_v(1+h_v)^2-\phi_u h_u h_v, \phi_u + \phi_v)}
{1+h_u^2+h_v^2}
\end{equation}
where $\phi_u = \frac{\partial \phi (\mathbf{X}(u,v))}{\partial u}$
and  $\phi_v = \frac{\partial \phi (\mathbf{X}(u,v))}{\partial v}$.
Similarly, we can use Equations (\ref{gradient_X})-(\ref{laplace_g})
to compute the divergence of a vector field $\mathbf{V}$ and the
Laplacian $\Delta _{\Sigma} \phi$ of a smooth function $\phi$ on the
graph surface $\Sigma$.

From these equations, we can conclude that if we have  $k$th-order
polynomial approximations of the height function $h$ around the
vertex $\mathbf{v} \in \Sigma$ and the smooth function $\phi$  on
$\Sigma$, we obtain $(k-1)$th-order approximations for the normal
vector, $E, F ,G$ and the gradient of $\phi$. We also obtain
$(k-2)$th-order approximations for the Gaussian  and mean curvatures
and also the Laplacian of $\phi$  on $\Sigma$.

\section{Numerical simulations}\label{simulation}
In this section,  we present some numerical simulations of our
proposed methods in sections 3 and 4. The models in our simulations
are the unit sphere, the torus with inner radius $0.5$ and outer
radius $1$, a dumbbell with the parametrization
\begin{equation}\label{eqn_dumbbell}
\mathbf{X}(u,v) = \begin{pmatrix} r \sin v \cos u, & r \sin v \sin u, & r \cos v \end{pmatrix}
\end{equation}
where $r=\sqrt{0.9^2 \cos (2x) + \sqrt{1 - 0.9^4 sin^2 (2x)}}$, see
Figure \ref{fig_dumbbell}, and a wave surface with
\begin{equation}\label{eqn_parametricsurface}
\mathbf{X}(u,v) = \begin{pmatrix}u, v, \sin u \cos(v) \end{pmatrix},
\end{equation}  where $(u,v) \in [0, 2\pi]\times [0, 2\pi]$, see Figure \ref{fig_parametric_surface}.

\begin{figure}[!t]
\centering
\includegraphics[height=2in, bb=  0 0 468 714]{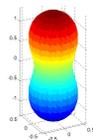}
\caption{The figure of a dumbbell}\label{fig_dumbbell}

\end{figure}

\begin{figure}[!t]
\centering
\includegraphics[scale=.5]{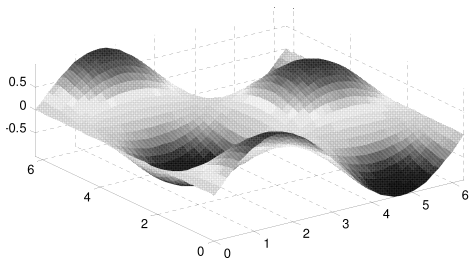}
\caption{The figure of the wave surface}\label{fig_parametric_surface}

\end{figure}

\subsection{Simulations for the $O(r)$-LTL algorithm }
First, we show the numerical solutions of eigenvalues of
Laplace-Beltrami operator on the unit sphere and some curved
surfaces by the $O(r)$-LTL method in section
\ref{section_configuration_LB_1}.
 The $O(r)$-LTL approach  is an special case for our
$O(r^n)$-LTL method. Although the $O(r)$-LTL method has a lower
convergent rate,  it only requires at least 5 neighboring vertices
of each vertex on the surfaces and the triangular meshes always can
be reconstructed such that the number of $1$-ring neighboring
vertices  of each vertex is at least $5$.

Table \ref{eigenvalues_sphere} shows the simulation results of the
eigenvalues and its multiplicity of the Laplace-Beltrami operator on
the unit sphere by our $O(r)$-LTL method. The triangular mesh sizes
of the unit sphere in our simulations are $0.32, 0.16, 0.08$ and
$0.04$. We give these triangular meshes in Figure
\ref{sphere_model}. Figure \ref{eig_torus} gives some eigenfunctions
on the torus with inner radius $0.5$ and outer radius $1$ and Figure
\ref{eig_sphere} shows all eigenfunctions corresponding to the first
5 eigenvalues on a unit sphere by the $O(r)$-LTL method.

Figure \ref{eig_hemisphere} shows the convergence result of the
eigenvalues of a unit hemisphere by our $O(r)$-LTL  method. Our
$O(r)$-LTL  method has the  quadratic convergence.

\begin{figure}[!t]
\centering
\includegraphics[width=2.5in]{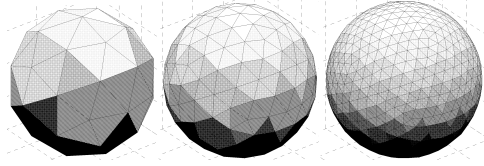}
\caption{The subdivision of triangular meshes of a unit sphere by $O(r)$-LTL method}\label{sphere_model}

\end{figure}

\begin{table}[!t]
\renewcommand{\arraystretch}{1.3}
\caption{The eigenvalues and its multiplicity of a unit sphere}
\label{eigenvalues_sphere} \centering
\begin{tabular}{|c||c||c||c|}
\hline
mesh size & 1st eigenvalue& 2nd eigenvalue  & 3th eigenvalue  \\
  &  (multiplicity) & (multiplicity) & (multiplicity) \\
\hline
0.32 & -2.0484 (3) & -6.0014 (5) & -11.5986 (7) \\
0.16 &  -2.0117 (3) & -6.0002 (5) & -11,8998 (7)  \\
0.08 &  -2.0029 (3) & -6.0 (5) & -11.9735 (7) \\
0.04 & -2.0 (3)  & -6.0 (5) & -12.0 (7) \\
\hline
\end{tabular}
\end{table}

\begin{figure}[!t]
\centering
\includegraphics[width=2in, bb=0 0 1693 1212]{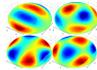}
\includegraphics[width=2.5in]{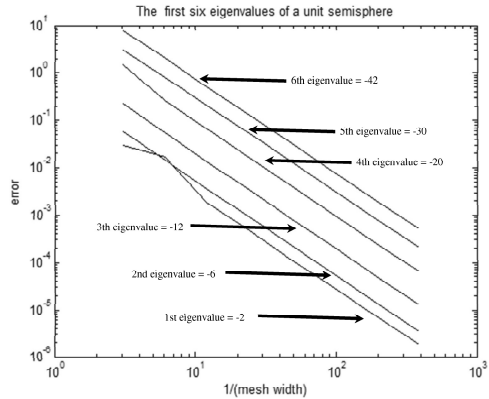}
\caption{The left  shows some eigenfunctions of a hemisphere by $O(r)$-LTL method.
The right  is a convergence study for the first six eigenvalues of  a unit hemisphere.}\label{eig_hemisphere}
\end{figure}

\begin{figure}[!t]
\centering
\includegraphics[ width=.8\textwidth, bb=0 0 1255 501]{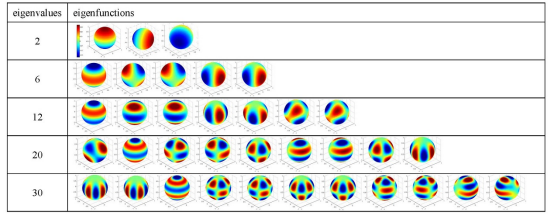}
\caption{The eigenfunctions of the Laplace-Beltrami operator on a unit sphere by $O(r)$-LTL method.}\label{eig_sphere}
\end{figure}

\begin{figure}[!t]
\centering
\includegraphics[width=3in,bb = 0 0 1428 1139]{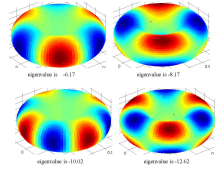}
\caption{The eigenfunctions of the Laplace-Beltrami operator on a torus by $O(r)$-LTL method.}\label{eig_torus}
\end{figure}

\subsection{ Simulations for the $O(r^n$)-LTL algorithm with $n \geq 2$ }

In this subsection, we compute the Laplacian of a   function,
\begin{equation}
F(u,v) = \exp( 0.5 \sin u + \cos ^3 v )
\end{equation}
where $(u,v)$ defined on the domain of $\mathbf{X}(u,v)$ in Equation
(\ref{eqn_parametricsurface}), on the parametric surface in Equation
(\ref{eqn_parametricsurface}) by high-order algorithms. How to
determine the necessary neighboring vertices of a given vertex is an
important problem for high-order accuracies. Ray et al.\cite{Ray}
proposed an elegant method to improve this problem by using the
$\frac{j}{2}$-ring ($j \geq 2$) for neighboring vertices, see Figure
\ref{fig_neighbor}. We will also use the  $\frac{j}{2}$-ring
neighboring vertices in our simulations.

We estimate the $l^\infty$ relative error of Laplacian on each mesh
as
\begin{equation}
l^{\infty} \mbox{ relative error} = \max_{ \mathbf{v}_i \in V} ( \mbox{ relative error at } \mathbf{v}_i )
\end{equation}
where
\begin{equation}
\mbox{ relative error at } \mathbf{v}_i = \frac{ \| \mbox{ numerical solution at }\mathbf{v}_i -
\mbox{ reference solution at } \mathbf{v}_i\|}{\|\mbox{ reference solution at } \mathbf{v}_i\|}
\end{equation}
and $\|\mbox{ reference solution at } \mathbf{v}_i\| \neq 0$.

 Figure \ref{error_laplacian_parametric_surface} shows the $l^\infty$
relative errors of Laplacian of the function $F(u,v) = \exp(0.5 \sin
u + \cos ^3 v)$ in the interior vertices on this parametric surface.
These results always converge to the exact Laplacian when the mesh
size of the triangular mesh approaches $0$.

\begin{figure}[!t]
\centering
\includegraphics[width=3in]{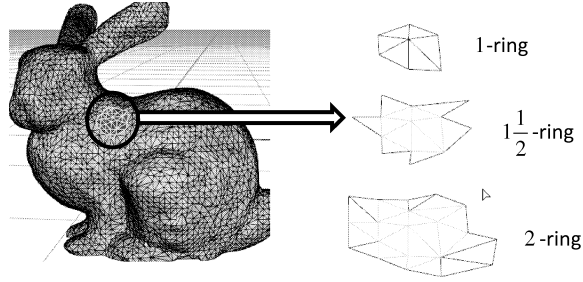}
\caption{ The $1$, $1\frac{1}{2}$ and $2$-rings of a vertex on the bunny model.}\label{fig_neighbor}
\end{figure}

\begin{figure}[!t]
\centering
\includegraphics[width=3in]{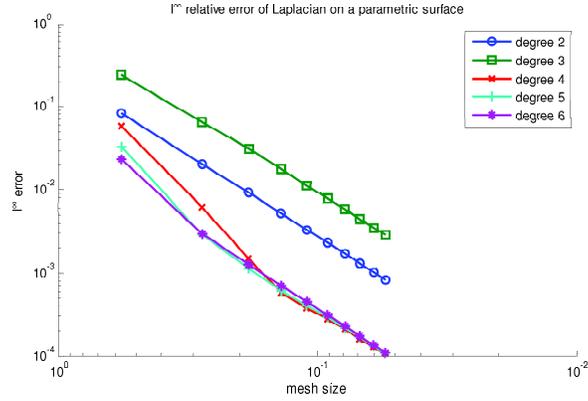}
\caption{The $l^\infty$ relative errors of laplacian of the function
$F(u,v)=\exp(0.5 \sin u + \cos ^2 v)$ in Equation  on a wave surface in Equation (\ref{eqn_parametricsurface}) }\label{error_laplacian_parametric_surface}

\end{figure}

Now, we compare the eigenvalues and eigenfunctions on a unit sphere.
It is well-known that the $n$-th nonzero eigenvalues of the
Laplace-Beltrami operator on the unit sphere are $-n(n+1)$ with
multiplicity $(2n+1)$. The eigenfunctions of the unit sphere are
restrictions on the unit sphere of harmonic homogeneous polynomials
in $\mathbb{R}^3$. Figures \ref{er_1st_eigenvalue_sphere} and
\ref{er_2nd_eigenvalue_sphere} show the  errors of the eigenvalues
$\lambda=2$ and $6$ on a unit sphere, repsectively. For the error
$e(h_1)$ and $e(h_2)$ for the mesh size $h_1$ and $h_2$ the
experimental error of convergence is defined as
\begin{equation}
EOC(h_1,h_2) = \log\frac{e(h_1) }{e(h_2)}\left ( \log \frac{h_1}{h_2} \right ) ^{-1}.
\end{equation}
Tables \ref{EOC_eigenval1} and \ref{EOC_eigenval2} show the EOC of
the first and second eigenvalues on the unit sphere, respectively.

\begin{table}[!t]
\centering \caption{EOC of the 1st eigenvalue on a unit
sphere}\label{EOC_eigenval1}
\begin{tabular}{cccccc}
\hline  \\
mesh size & degree 2 & degree 3 & degree 4 & degree 5 & degree 6      \\
\hline
0.5443   & - & - & - & - & - \\
0.2774 & 2.0971 & 1.9543 & 3.6868 & 4.1610 & 6.7939    \\
0.1877 & 2.0901 & 1.9751 & 4.1204 & 3.8777 & 5.9443    \\
0.1407 & 2.0049 & 1.9327 & 4.0126 & 3.8251 & 6.1007     \\
0.1130 & 2.0384 & 1.9872 & 4.0847 & 3.9515 & 6.2562    \\
0.0941 & 1.9998 & 1.9626 & 4.0049 & 3.9122 & 5.9561   \\
0.0807 & 2.0243 & 1.9949 & 4.0537 & 3.9847 & 6.9588   \\
0.0706 & 1.9989 & 1.9754 & 3.9973 & 3.9468 & 5.8779  \\
\hline
\end{tabular}
\end{table}

\begin{table}[!t]
\centering \caption{EOC of the 2nd eigenvalue on a unit
sphere}\label{EOC_eigenval2}
\begin{tabular}{cccccc}
\hline  \\
mesh size & degree 2 & degree 3 & degree 4 & degree 5 & degree 6      \\
\hline
0.5443   & - & - & - & - & - \\
0.2774 & 1.8991     & 3.4314    &  2.8961  &    5.3251  &    5.9809  \\
0.1877 & 2.0115     & 4.0060    &  3.7518  &    5.8448  &    5.2146  \\
0.1407 & 1.9552     & 3.9231    &  3.7957  &    5.3624  &    3.8108  \\
0.1130 & 1.9999     & 4.0148    &  3.9412  &    5.2539  &    3.8859   \\
0.0941 & 1.9691     & 3.9537    &  3.9086  &    4.9920  &    3.7684   \\
0.0807 & 1.9981     & 4.0113    &  3.9810  &    4.8949  &    3.7388   \\
0.0706 & 1.9765     & 3.9473    &  3.9413  &    5.1215  &    3.6412   \\
\hline
\end{tabular}
\end{table}

Let $\{ \phi_{n,i} \}_{i=1}^{2n+1}$ be the set of eigenfunctions
corresponding to the eigenvalue $-n(n+1)$ on the unit sphere and let
$\tilde{\phi}_{n,i}$ be the approximated solution by our
$O(r^n)$-LTL method. Then we choose $\bar{\phi}_{n,i}$ to be the
linear combination $\sum_{i=1}^{2n+1} \alpha_i \phi_{n,i}$ so that
it realizes the minimum of $\| \sum_{i=1}^{2n+1} \beta_j \phi_{n,i}
- \tilde{\phi}_{n,i}\|$ over $\beta_j$'s. The error $E_n$ of the nth
eigenvalue with eigenfunctions $\bar{\phi}_{n,i}$ and the
approximating eigenfunctions $\tilde{\phi}_{n,i}$ is defined by
\begin{equation}\label{l_infty_infty_error}
 E_n = \sup_{i \in \{1,2,\cdots, 2n+1\}} (\|\bar{\phi}_{n,i} - \tilde{\phi}_{n,i} \|_{l^\infty}).
\end{equation}
Figures \ref{er_1st_eigenfunction_sphere} and
\ref{er_2nd_eigenfunction_sphere} show  the   errors $E_n$ in
Equation (\ref{l_infty_infty_error}) of the 1st and 2nd eigenvalues
on a unit sphere, respectively.

\begin{figure}[!t]
\centering
\includegraphics[width=3in]{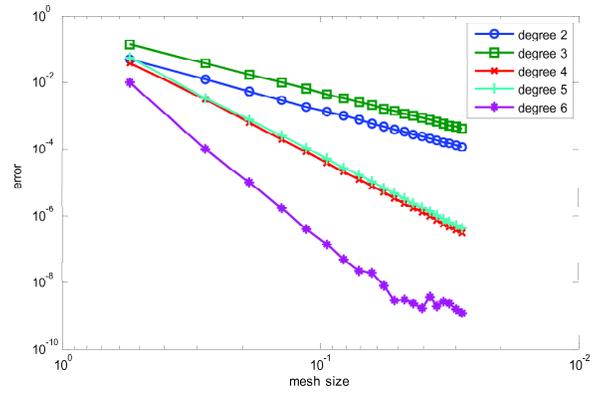}
\caption{ The  errors of the 1st eigenvalues on a unit sphere. }\label{er_1st_eigenvalue_sphere}
\end{figure}

\begin{figure}[!t]
\centering
\includegraphics[width=3in]{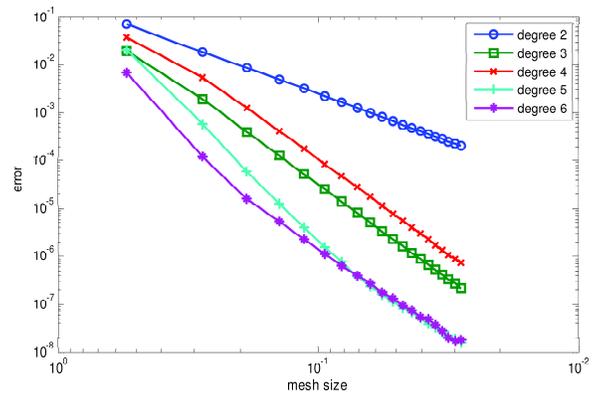}
\caption{ The $l^\infty$  errors of the 2nd eigenvalues   on a unit sphere. }\label{er_2nd_eigenvalue_sphere}
\end{figure}

\begin{figure}[!t]
\centering
\includegraphics[width=3in]{1st_eigenfunction_sphere.eps}
\caption{ $E_1$   on a unit sphere. }\label{er_1st_eigenfunction_sphere}
\end{figure}

\begin{figure}[!t]
\centering
\includegraphics[width=3in]{2nd_eigenfunction_sphere.eps}
\caption{ $E_2$ on a unit sphere. }\label{er_2nd_eigenfunction_sphere}
\end{figure}

\subsection{Geometric invariants on curved surface}
Finally, we estimate the normal vectors and the tensor of curvatures
on regular surfaces. Figures \ref{error_normals_torus} -
\ref{error_mean_curvature_torus} show  the $\l^\infty$ relative
errors of normal vectors, Gaussian curvatures and mean curvatures on
a torus with the inner radius $0.5$ and the outer radius $1$.
Figures \ref{error_gaussian_curvature_dumbbell} -
\ref{error_mean_curvature_dumbbell} give the $\l^\infty$ relative
errors of normal vectors, Gaussian curvatures and mean curvatures on
a dumbbell  in Equation (\ref{eqn_dumbbell}) and Figure
\ref{error_gaussian_curvature_wavesurface} shows the $\l^\infty$
errors of Gaussian curvatures on a wave surface. Obviously, the
high-order algorithm is much more accurate than
 the low-order algorithm when the mesh size is small
enough.

\begin{figure}[!t]
\centering
\includegraphics[width=3in]{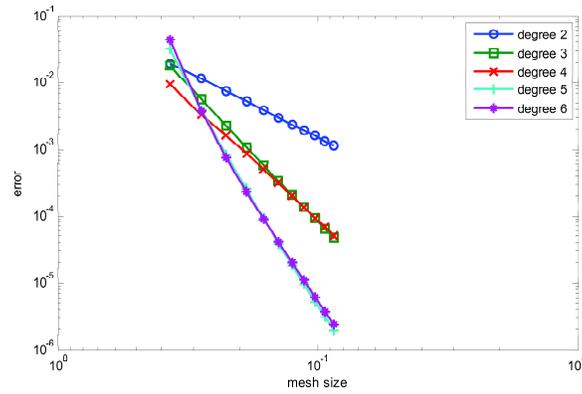}
\caption{The $l^\infty$ relative errors of normal vectors on a torus }\label{error_normals_torus}
\end{figure}

\begin{figure}[!t]
\centering
\includegraphics[width=3in]{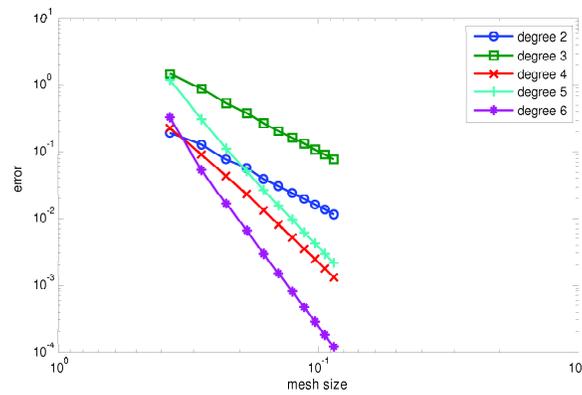}
\caption{The $l^\infty$ relative error of Gaussian curvatures on a torus }\label{error_gaussian_curvature_torus}

\end{figure}

\begin{figure}[!t]
\centering
\includegraphics[width=3in]{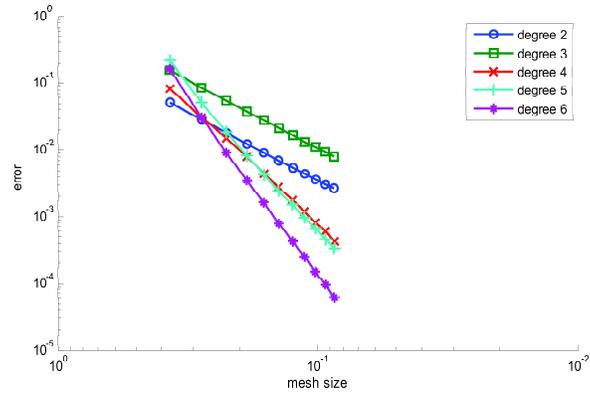}
\caption{The $l^\infty$ relative error of mean curvatures on a torus }\label{error_mean_curvature_torus}

\end{figure}

\begin{figure}[!t]
\centering
\includegraphics[width= 3in]{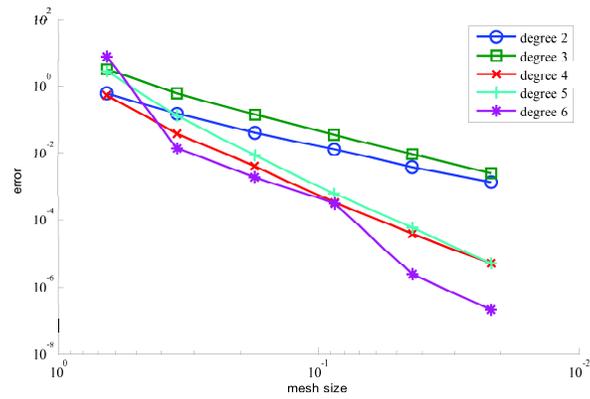}
\caption{The $l^\infty$ relative errors of Gaussian curvatures on a dumbbell}\label{error_gaussian_curvature_dumbbell}

\end{figure}

\begin{figure}[!t]
\centering
\includegraphics[width= 3in]{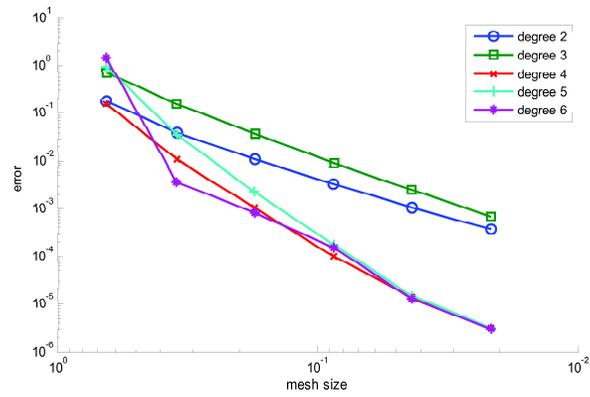}
\caption{The $l^\infty$ relative error of mean curvatures on a dumbbell}\label{error_mean_curvature_dumbbell}

\end{figure}

\begin{figure}[!t]
\centering
\includegraphics[width= 3in]{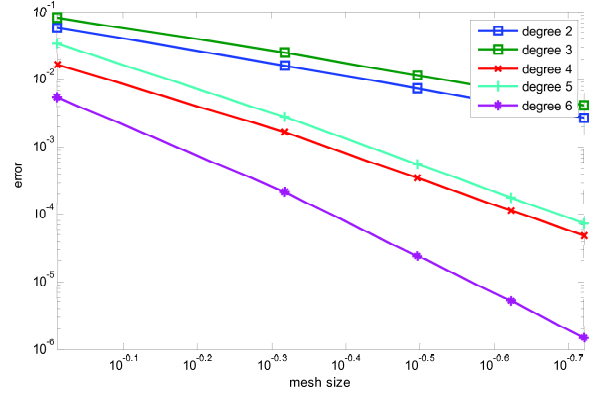}
\caption{The $l^\infty$ relative error of Gaussian curvatures on a wave surface}\label{error_gaussian_curvature_wavesurface}

\end{figure}

\section{Dissusions and Conclusions}

In 2012, Ray et al. also proposed  a high-order numerical method for
estimating the derivatives and integrations over discrete surfaces.
Ray et al. solved the system of linear equations from Taylor
expansion by the least square method. However,  we deal with these
problems from the viewpoint of duality and obtain  our configuration
equations for the  Laplace-Beltrami operators over discrete
surfaces.  The  configuration equation for Laplace-Beltrami operator
is

\begin{equation}\label{configuration_LB_new}
\begin{pmatrix}
x_1, & x_2, & \cdots, & x_n \cr
y_1, & y_2, & \cdots, & y_n \cr
\frac{x_1^2}{2}, & \frac{x_2^2}{2}, & \cdots, & \frac{x_n^2}{2} \cr
x_1 y_1 & x_2 y_2, & \cdots, & x_n y_n \cr
\frac{y_1^2}{2}, & \frac{y_2^2}{2}, & \cdots, & \frac{y_n^2}{2}
\end{pmatrix}
\begin{pmatrix}
\alpha_1 \cr
\alpha_2 \cr
\vdots \cr
\alpha_n
\end{pmatrix}
= \begin{pmatrix} 0 \cr 0 \cr 1 \cr 0 \cr 1 \end{pmatrix}
\end{equation}
and the Laplacian $\Delta f(0,0)$ in Equation (\ref{laplace_1}) is
given by
\begin{equation}
\Delta f(0,0) = \sum_{i=1}^n \alpha_i(f(x_i,y_i) - f(0,0)).
\end{equation}
Similarly, we can also obtain a high-order configuration equation
for the Laplace-Beltrami operator in the $O(r^n)$-LTL method.

Although the solution of the configuration equation  may not be
unique in general, the convergence rates of all solutions are
similar by the theoretical analysis. However, not every solution has
a good numerical  simulation. Finding a suitable solution $\{
\alpha_i \}_{i=1}^n$ is a key problem in Ray's method and our
$O(r^n)$-LTL methods. In Ray's method,   this problem can be
improved   by using the conditional number\cite{Ray}. The pseudo
inverse of the configuration matrix is a good approach to handle
this problem in our $O(r^n)$-LTL methods. Hence, we have
\begin{equation}
\begin{pmatrix}
\alpha_1 \cr
\alpha_2 \cr
\vdots \cr
\alpha_n
\end{pmatrix}
= \mathrm{pinv}( \begin{pmatrix} x_1, & x_2, & \cdots, & x_n \cr
y_1, & y_2, &  \cdots, &  y_n \cr x_1 y_1, &  x_2 y_2, & \cdots, & x_n y_n \cr
x_1^2 - y_1^2, & x_2^2 y_2^2, & \cdots, &  x_n^2 - y_n^2 \cr 1, & 1,
& \cdots, & 1 \end{pmatrix}) \begin{pmatrix} 0 \cr 0 \cr 0 \cr 0
\cr \epsilon \end{pmatrix}
\end{equation}
in Equation (\ref{configuration_LB}) where
$\mathrm{pinv}(\mathbf{M})$ is the pseudo inverse of the matrix
$\mathbf{M}$.

As expected, the high-order approach is more accurate than the
low-order approach. However,  the low-order approach is more stable
than the high-order approach in our simulations. When we use the
high-order approach, we need more neighboring vertices at each
vertex and the structure of $\frac{j}{2}$-ring becomes more
complicated. For instance we need at least 26 neighboring vertices
for the degree $6$ approximations.

Note that our $O(r)$-LTL method is a special case of the
$O(r^n)$-LTL methods. In the $O(r)$-LTL method, we only need $5$
neighboring vertices at each vertex for estimating the Laplacian of
a function on a surface by the configuration equation
(\ref{configuration_LB}). Almost all vertices on a closed triangular
mesh can be reconstructed so that the number of $1$-ring is at least
$5$ for each vertex in the new mesh.  Ray's method and our
high-order configuration equation, Equation
(\ref{configuration_LB_new}), need at least $6$ neighboring
vertices. Indeed, our $O(r)$-LTL method for estimating Laplacian is
a generalization of the well-known the $5$- point Laplacian method
in $\mathbb{R}^2$.

In our $O(r)$-LTL configuration method,  Equations
(\ref{problem_LB_1}) and (2) can be reduced to the matrix equations
$$ (WA)(\phi(V)) = \lambda(\phi(V)) $$
and
$$ (WB)(\phi(V)) = \lambda(\phi(V)), $$
respectively. The configuration masks $A=(a_{i,k}), B=(b_{i,k})$ are
given by
\begin{equation}
 a_{i,k} = \left \{ \begin{array}{ll} \alpha_{i,j} & \mbox{ if } v_k = v_{i,j}, \cr
 -1 & \mbox{ if } i=k, \cr
 0 & \mbox{ otherwise,} \end{array} \right.
\end{equation}
and
\begin{equation}
 b_{i,k} = \left \{ \begin{array}{ll} \beta_{i,j} & \mbox{ if } v_k = v_{i,j}, \cr
 -1 & \mbox{ if } i=k, \cr
 0 & \mbox{ otherwise,} \end{array} \right.
\end{equation}
where $\alpha_{i,j}$ is a solution of Equation
(\ref{configuration_LB}) and $\beta_{i,j}$ is a solution of Equation
(\ref{configuration_LB2}). Using the configuration matrix $A$, we
can also solve the diffusion equation
$$ u_t - \Delta_{\Sigma} u = f$$
 on regular surface $\Sigma$ easily. Furthermore, our $O(r^n)$-LTL configuration method can also solve
 general partial differential equations, $L(u) = f$, on a regular surface $\Sigma$ or on more complicated domains,
 like Koch snowflakes. The key of our $O(r^n)$-LTL configuration
 method for solving the PDE $L(u) = f$ is to find the configuration matrix of the differential operator $L$
 on the surface $\Sigma$.

It is worth to point out that our $O(r^n)$-LTL configuration method
is effective to solve the eigenpair problems with high-order
accuracies even when the underlying surfaces or domains have
complicated topological or geometrical structures.

In the near future, we shall extend our $O(r^n)$-LTL configuration
method to solve more partial differential equations on regular
surfaces.

\section*{Acknowledgment}
This paper is partially supported by NSC, Taiwan.

\end{document}